\newcommand{\R}{\mathbb{R}}
\theoremstyle{plain}
\newtheorem{thm}[equation]{Theorem}
\newtheorem{lemma}[equation]{Lemma}
\theoremstyle{definition}
\newtheorem{defn}[equation]{Definition}
\newtheorem{exmp}[equation]{Example}
\theoremstyle{remark}
\numberwithin{equation}{section}
\title{Ahlfors-David regular sets and bilipschitz maps}
\author{Pertti Mattila}
\address{Pertti Mattila: Department of Mathematics and Statistics\\FI-00014 University of Helsinki\\ Finland\\ } 
\email{pertti.mattila@helsinki.fi}
\date{}
\author{Pirjo Saaranen}
\address{Pirjo Saaranen:  Haaga-Helia\\ Hietakummuntie 1 A\\FI-00700 Helsinki\\Finland\\}
\email{pirjo.saaranen@haaga-helia.fi}
\thanks{Part of these results were included in the licentiate thesis of Pirjo Saaranen at the University 
of Jyv\"askyl\"a in 1991.}
\begin{document} 
\subjclass[2000]{28A75}
\keywords{regular set, bilipschitz mapping}

\begin{abstract}
Given two Ahlfors-David regular sets in metric spaces, we study the question whether one of 
them has a subset bilipschitz equivalent with the other.
\end{abstract}

\maketitle

\section{Introduction}
In this paper we shall study Ahlfors-David regular subsets of metric spaces. 
Throughout $(X,d)$ and $(Y,d)$ will be metric spaces. For $E,F\subset X$ and $x\in X$ 
we shall denote by $d(E)$ the diameter of $E$, by $d(E,F)$ the distance between $E$ and $F$, and by $d(x,E)$ the 
distance from $x$ to $E$. The closed ball with center $x$ and radius $r$ 
is denoted by $B(x,r)$.

\begin{defn}\label{regular} Let $E\subset X$ and $0<s<\infty$. 
We say that $E$ 
is $s$-regular if it is closed and if there exists a Borel (outer) measure $\mu$ on $X$ 
and a constant $C_E, 1\leq C_E<\infty$, such that $\mu(X\setminus E)=0$ and
$$r^s\leq\mu(B(x,r))\leq C_Er^s\ \text{for all}\ x\in E, 0<r\leq d(E), r<\infty.$$
\end{defn}

Observe that this implies that the right hand inequality holds for all $x\in E, r>0$, and  
$$\mu(B(x,r))\leq 2^sC_Er^s\ \text{for all}\ x\in X, r>0.$$

We would get an equivalent definition (up to the value of $C_E$), if we would use the restriction of 
the $s$-dimensional Hausdorff measure on $E$, with $r^s$ on the left hand side replaced by $r^s/C_E$. 
When we shall speak about a regular set $E$, $\mu$ will always stand for a measure as above.

We remark that closed and bounded subsets of regular sets are compact, see Corollary 5.2 in [DS2]. Self 
similar subsets of $\R^n$ satisfying the open set condition are standard examples of regular sets, see 
[H].

A map $f:X\to Y$ is said to be bilipschitz if it is onto and there is a positive number $L$, called a 
bilipschitz constant of $f$,  such that 
$$d(x,y)/L\leq d(f(x),f(y))\leq Ld(x,y)\ \text{for all}\ x,y\in X.$$
The smallest such $L$ is denoted by bilip$(f)$.
Evidently any bilipschitz image of an $s$-regular set is $s$-regular. But 
two regular sets of the same dimension $s$ need not be bilipschitz equivalent. This is so even for very simple 
Cantor sets in $\R$, see [FM], [RRX] and [RRY] for results on the bilipschitz equivalence of such Cantor sets, 
and for [DS2] for extensive analysis of bilipschitz invariance properties of fractal type sets. 

The main content of this paper is devoted to the following question: suppose $E$ is $s$-regular and $F$ is 
$t$-regular. If $s<t$, does $F$ have a subset which is bilipschitz equivalent to $E$? 
In this generality the answer
is obviously no due to topological reasons; $E$ could be connected and $F$ 
totally disconnected. We shall prove in Theorem \ref{bilip1} that the answer is yes for any $0<s<t$ if 
$E$ is a standard 
$s$-dimensional Cantor set in some $\R^n$ with $s<n$. We shall also prove in Theorem \ref{bilip2} 
that the answer is always yes if $s<1$. In Section 4 we show that if $E$ and $F$ as above are subsets 
of $\R^n$ and $s$ is sufficiently small, then a bilipschitz map $f$ with $f(E)\subset F$ can be defined in 
the whole of $\R^n$. We don't know if this holds always when $0<s<1$.

In the last section of the paper we shall discuss sub- and supersets of regular sets. It follows from 
Theorem \ref{bilip1} that an $s$-regular set contains a $t$-regular subset for any $0<t<s$. In the other 
direction we shall show 
that if $E\subset X$ is $s$-regular and $X$ is $u$-regular, then for any $s<t<u$ there is a $t$-regular set 
$F$ such that $E\subset F\subset X$. 
On the other hand there are rather nice sets which do not contain any regular subsets: we shall construct 
a compact subset of $\R$ with positive Lebesgue measure which does not contain any $s$-regular subset for 
any $s>0$.

Regular sets in connection of various topics of analysis are discussed for example in [DS1] and [JW].

\section{Some lemmas on regular sets}

In this section we shall prove some simple lemmas on regular sets.  

\begin{lemma} \label{balls}
Let $0<s<\infty$ and let $E\subset X$ be $s$-regular. For every $0<r<R\leq d(E), R<\infty$, and $p\in E$ there exist 
disjoint closed balls $B(x_i,r),i=1,\dots,m$, such that $x_i\in E\cap B(p,R)$,
$$(5^sC_E)^{-1}(R/r)^s\leq m\leq 2^sC_E(R/r)^s$$
and
$$E\cap B(p,R)\subset\bigcup_{i=1}^mB(x_i,5r).$$
\end{lemma}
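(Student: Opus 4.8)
The standard approach is a maximal packing argument. Fix $p \in E$ and $0 < r < R \leq d(E)$. Consider the collection of all closed balls $B(x,r)$ with $x \in E \cap B(p,R)$, and choose a maximal disjoint subfamily $B(x_1,r),\dots,B(x_m,r)$ (maximal with respect to inclusion; this is finite by the measure bounds we will derive, or one can invoke Zorn and then see finiteness a posteriori). By maximality, every point $y \in E \cap B(p,R)$ has $B(y,r)$ meeting some $B(x_i,r)$, hence $d(y,x_i) \leq 2r$, so in fact $E \cap B(p,R) \subset \bigcup_{i=1}^m B(x_i,2r) \subset \bigcup_{i=1}^m B(x_i,5r)$ — so the covering conclusion holds with room to spare (the $5r$ is surely there to match the constants in the cardinality bound, or for later use).

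For the \emph{lower} bound on $m$: the balls $B(x_i,5r)$ cover $E \cap B(p,R)$, so using $\mu(X \setminus E) = 0$ and the two-sided regularity estimate,
$$R^s \leq \mu(B(p,R)) \leq \sum_{i=1}^m \mu(B(x_i,5r)) \leq m\, C_E (5r)^s = m\, 5^s C_E\, r^s,$$
which rearranges to $m \geq (5^s C_E)^{-1}(R/r)^s$. Here I should check that $5r$ is a legitimate radius for the upper regularity bound; by the remark following Definition~\ref{regular}, the right-hand inequality $\mu(B(x,\rho)) \leq C_E \rho^s$ holds for all $x \in E$ and all $\rho > 0$, so this is fine even when $5r > d(E)$.

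For the \emph{upper} bound on $m$: the balls $B(x_i,r)$ are disjoint and, since each $x_i \in B(p,R)$ and $r < R$, they all sit inside $B(p,2R)$. Hence
$$m\, r^s \leq \sum_{i=1}^m \mu(B(x_i,r)) = \mu\Big(\bigcup_{i=1}^m B(x_i,r)\Big) \leq \mu(B(p,2R)) \leq C_E (2R)^s = 2^s C_E\, R^s,$$
again using $\mu(X \setminus E) = 0$ for the first inequality (each $\mu(B(x_i,r)) \geq r^s$ since $r \leq d(E)$) and the extended upper bound $\mu(B(p,2R)) \leq C_E(2R)^s$ for the last. This gives $m \leq 2^s C_E (R/r)^s$. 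There is no real obstacle here; the only points requiring a little care are (i) making sure every radius appearing in an upper estimate is covered by the \emph{global} form of the upper regularity inequality from the remark after the definition (so that we are not restricted to radii $\leq d(E)$), and (ii) noting that finiteness of $m$ is automatic from the upper estimate once a maximal disjoint family is produced. Assembling (i)–(iii) yields all three assertions.
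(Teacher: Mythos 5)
Your proof is correct and essentially the paper's argument: the paper simply cites the standard $5r$-covering theorem (Theorem 2.1 in [M]) to produce the disjoint balls $B(x_i,r)$ whose dilates cover $E\cap B(p,R)$, whereas you prove that step directly by a maximal packing argument (which, with equal radii, even gives covering by $B(x_i,2r)\subset B(x_i,5r)$). The two measure comparisons yielding $(5^sC_E)^{-1}(R/r)^s\leq m\leq 2^sC_E(R/r)^s$, including the use of the extended upper regularity bound for radii possibly exceeding $d(E)$, are exactly those in the paper.
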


\begin{proof} By a standard covering theorem, see, e.g., Theorem 2.1 in [M], we can find disjoint balls 
$B(x_i,r), i=1,2,\dots$, such that $x_i\in E\cap B(p,R)$ and the balls $B(x_i,5r)$ cover $E\cap B(p,R)$. 
There are only finitely many,
say $m$, of these balls, since the disjoint sets $B(x_i,r)$ have all $\mu$ measure at least 
$r^s$, they are contained in $B(p,2R)$ which has measure at most $C_E(2R)^s$. More precisely, we have 
$$mr^s\leq\sum_{i=1}^m\mu(B(x_i,r))\leq\mu(B(p,2R))\leq C_E(2R)^s,$$
whence $m\leq 2^sC_E(R/r)^s$, and 

$$mC_E5^sr^s\geq\sum_{i=1}^m\mu(B(x_i,5r))\geq\mu(B(p,R))\geq R^s,$$
whence $m\geq (5^sC_E)^{-1}(R/r)^s$.
\end{proof}

For less than one-dimensional sets we can get more information:
 
\begin{lemma} \label{rings}
Let $0<s<1, C\geq1, R>0$, let $E\subset X$ be closed and bounded and let $\mu$ be a 
Borel measure on $X$ such that $\mu(X\setminus E)=0$ and that
$$\mu(B(x,r))\leq Cr^s\ \text{for all}\ x\in E, r>0,$$
and
$$\mu(B(x,r))\geq r^s\ \text{for all}\ x\in E, 0<r<R.$$
Let $D=(3C2^s)^{1/(1-s)}+1$. For every $0<r<R/(2D)$ there exist 
disjoint closed balls $B(x_i,r),i=1,\dots,m$, and positive numbers $\rho_i, r\leq\rho_i\leq Dr$, 
such that $m\leq Cd(E)^s/r^s,x_i\in E,x_j\not\in B(x_i,\rho_i)$ for $i<j$,
$$E\subset\bigcup_{i=1}^mB(x_i,\rho_i)\ \text{and}\ E\cap B(x_i,\rho_i+r)\setminus B(x_i,\rho_i) 
=\emptyset.$$
\end{lemma}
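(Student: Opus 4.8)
\emph{The plan is to prove the lemma in two stages.} In the first stage, for each $x\in E$ I will locate a radius $\rho(x)\in[r,Dr]$ such that the ring $B(x,\rho(x)+r)\setminus B(x,\rho(x))$ is disjoint from $E$. In the second stage I will build the family $B(x_i,\rho_i)$ greedily, always taking the next centre outside the balls already chosen, and then use the ring property to upgrade this to disjointness of the balls $B(x_i,r)$.

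\emph{Stage one.} Fix $x\in E$ and let $N$ be the least integer with $N>(3C2^s)^{1/(1-s)}=D-1$; then $N\le D$, and since $C\ge1$ and $0<s<1$ one checks $N\ge3$. Consider the $N$ rings $A_k:=B(x,(k+1)r)\setminus B(x,kr)$, $k=1,\dots,N$. I claim at least one of them misses $E$. If not, pick $y_k\in E\cap A_k$ for each $k$, so $kr<d(x,y_k)\le(k+1)r$. Restricting to the indices $k\le N$ with $k\equiv1\pmod3$ (there are $\lceil N/3\rceil$ of them) and using the triangle inequality, two such distinct indices $k<k'$ satisfy $d(y_k,y_{k'})\ge d(x,y_{k'})-d(x,y_k)>k'r-(k+1)r\ge2r$; hence the balls $B(y_k,r)$ over these indices are pairwise disjoint. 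Each has $\mu$-measure at least $r^s$ (because $y_k\in E$ and $r<R$), and all of them lie in $B(x,(N+2)r)$. Comparing measures, $\lceil N/3\rceil\,r^s\le\mu(B(x,(N+2)r))\le C\big((N+2)r\big)^s\le C(2N)^sr^s$, which forces $N^{1-s}\le3C2^s$, i.e.\ $N\le(3C2^s)^{1/(1-s)}$, contradicting the choice of $N$. So some ring $A_{k_0}$ with $k_0\le N$ is disjoint from $E$; put $\rho(x):=k_0r\in[r,Dr]$, so that $E\cap\big(B(x,\rho(x)+r)\setminus B(x,\rho(x))\big)=\emptyset$.

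\emph{Stage two.} Choose $x_1\in E$ arbitrarily and proceed recursively: having chosen $x_1,\dots,x_i$, set $\rho_j:=\rho(x_j)$; if $E\subset\bigcup_{j=1}^iB(x_j,\rho_j)$ stop and let $m:=i$, otherwise pick any $x_{i+1}\in E\setminus\bigcup_{j=1}^iB(x_j,\rho_j)$. By construction $x_j\notin B(x_i,\rho_i)$ whenever $i<j$. The key point is that this separation is strict by $r$: if $j\le i$, then $x_{i+1}\in E$, $x_{i+1}\notin B(x_j,\rho_j)$, and $E$ does not meet the ring $B(x_j,\rho_j+r)\setminus B(x_j,\rho_j)$, so necessarily $x_{i+1}\notin B(x_j,\rho_j+r)$, i.e.\ $d(x_{i+1},x_j)>\rho_j+r\ge2r$. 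Hence all the balls $B(x_i,r)$ are pairwise disjoint; each has measure $\ge r^s$, and since $x_1\in E$ we have $E\subset B(x_1,d(E))$, so
$$\sum_i\mu(B(x_i,r))=\mu\Big(E\cap\bigcup_iB(x_i,r)\Big)\le\mu(E)\le\mu(B(x_1,d(E)))\le Cd(E)^s .$$
Therefore the recursion stops after at most $Cd(E)^s/r^s$ steps, which gives $m\le Cd(E)^s/r^s$, the covering $E\subset\bigcup_{i=1}^mB(x_i,\rho_i)$, and all the remaining assertions. (The hypothesis $r<R/(2D)$, in particular $r<R$, is what makes the lower bound on $\mu$ available for every radius-$r$ ball used above and keeps all the radii under control.)

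The greedy argument of Stage two is routine; the heart of the proof — and the only place where $s<1$ is genuinely used — is the packing estimate in Stage one, which is exactly what bounds the number $D$ of rings one must inspect independently of $r$.
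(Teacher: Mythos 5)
Your proof is correct and follows essentially the same route as the paper: a concentric-annulus packing argument (this is where $s<1$ enters) produces, for each centre, an empty annulus of width $r$ at radius between $r$ and $Dr$, and a greedy selection then yields the covering, the separation $x_j\notin B(x_i,\rho_i)$, the disjointness of the balls $B(x_i,r)$, and the bound $m\leq Cd(E)^s/r^s$ by comparing measures. The only cosmetic difference is that you obtain the factor $3$ by keeping every third annulus so that the balls $B(y_k,r)$ are genuinely disjoint, whereas the paper lets them overlap and counts each annulus at most three times.
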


\begin{proof} Let $x_1\in E$. Denote
$$A_0=B(x_1,r), A_i=B(x_1,(i+1)r)\setminus B(x_1,ir), i=1,2,\dots.$$
If $E\cap A_1=\emptyset$, denote $\rho_1=r$. Otherwise,    
let $l$ be the largest positive integer such that $2lr<R$ and $E\cap A_i\not=\emptyset$ for $i=1,\dots,l$, 
say $y_i\in E\cap A_i$. Then 
$B(y_i,r)\subset A_{i-1}\cup A_i\cup A_{i+1}\subset B(x_1,2lr)$. Therefore
\begin{align*}
&lr^s\leq\sum_{i=1}^l\mu(B(y_i,r))\leq\sum_{i=1}^l\mu(A_{i-1}\cup A_i\cup A_{i+1})\leq\\
&3\mu(B(x_1,2lr))\leq3C2^sl^sr^s,
\end{align*}
whence $l^{1-s}\leq3C2^s$ and, since $s<1$, $l\leq(3C2^s)^{1/(1-s)}=D-1$. 
As $2(l+1)\leq 2D<R/r$, 
we conclude that $E\cap A_{l+1}=\emptyset$ by the maximality of $l$. Let $\rho_1=(l+1)r$. 
Then $r\leq\rho_1\leq Dr$ and $E\cap B(x_1,\rho_1+r)\setminus B(x_1,\rho_1)=\emptyset$. 
Let $x_2\in E\setminus B(x_1,\rho_1)=E\setminus B(x_1,\rho_1+r)$. Then the balls $B(x_1,r)$ 
and $B(x_2,r)$ are disjoint. Repeating the same argument as above with $x_1$ replaced by $x_2$ 
we find $\rho_2$ such that  $r\leq\rho_2\leq Dr$ and 
$E\cap B(x_2,\rho_2+r)\setminus B(x_2,\rho_2)=\emptyset$. After $k-1$ steps we choose
$$x_k\in E\setminus\bigcup_{i=1}^{k-1}B(x_i,\rho_i),$$
if this set is non-empty. As in the proof of Lemma \ref{balls} this process ends after some 
$m$ steps when $E$ is covered by the balls $B(x_i,\rho_i), i=1,\dots,m$. Also, as before, 
$m$ satisfies the required estimate $m\leq Cd(E)^s/r^s$.
\end{proof}

The following lemma will be needed to get bilipschitz maps in the whole $\R^n$.

\begin{lemma} \label{rings2}
Let $C\geq 1$ and $\lambda\geq9$. There are positive numbers $s_0=s_0(C,\lambda), 0<s_0<1$, and $D=D(C,\lambda)>1$, depending only 
on $C$ and $\lambda$, with the following property.

Let $0<s<s_0$, let $E\subset X$ be closed and bounded, let $R>0$ and let $\mu$ be a 
Borel measure on $X$ such that $\mu(X\setminus E)=0$ and that
$$\mu(B(x,r))\leq Cr^s\ \text{for all}\ x\in E, r>0,$$
and
$$\mu(B(x,r))\geq r^s\ \text{for all}\ x\in E, 0<r<R.$$
For every $0<r<R/D$ there exist 
disjoint closed balls $B(x_i,\lambda\rho_i/3),i=1,\dots,m$, such that $x_i\in E$, $r\leq \rho_i\leq Dr$,
$m\leq Cd(E)^s/r^s,$
$$E\subset\bigcup_{i=1}^mB(x_i,\rho_i)\ \text{and}\ E\cap B(x_i,\lambda \rho_i)\setminus B(x_i,\rho_i)
=\emptyset.$$
\end{lemma}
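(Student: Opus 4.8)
\emph{Proof sketch.} The plan is to imitate the ``empty ring'' construction of Lemma~\ref{rings}, but to let the rings grow geometrically by the factor $\lambda$ instead of by a fixed amount $r$, so that a single empty ring already produces a moat of the form $B(x,\lambda\rho)\setminus B(x,\rho)$. The new difficulty is that the radii $\rho_i$ are now comparable to $r$ only up to the large constant $D$, which makes the naive greedy selection of Lemma~\ref{rings} fail; the selection must instead be organised so that an early ball cannot be swallowed by a later, much larger one. For $x\in E$ call $k\in\{0,1,2,\dots\}$ a \emph{good scale at $x$} if $E\cap B(x,\lambda^{k+1}r)\setminus B(x,\lambda^k r)=\emptyset$. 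The first step is to show that, if $0<r<R/D$, then every $x\in E$ has a good scale in $\{0,\dots,N\}$ for suitable $N$, $D$, $s_0$. If this failed one could pick $y_k\in E$ with $\lambda^k r<d(x,y_k)\le\lambda^{k+1}r$ for $k=0,\dots,N$; discarding the odd indices and using $\lambda\ge 9$ and the triangle inequality, the balls $B(y_{2j},\lambda^{2j}r/3)$ with $2j\le N$ are pairwise disjoint and lie in $B(x,2\lambda^{N+1}r)$, so the two measure estimates give $(\lfloor N/2\rfloor+1)(r/3)^s\le\mu(B(x,2\lambda^{N+1}r))\le C(2\lambda^{N+1}r)^s$, i.e.\ $\lfloor N/2\rfloor+1\le C\,6^s\lambda^{(N+1)s}$. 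I would defeat this by first fixing $N=N(C)$ with $\lfloor N/2\rfloor+1>2C$, then setting $D=\lambda^{N+1}$, and finally taking $s_0=s_0(C,\lambda)$ so small that $6^s\lambda^{(N+1)s}<2$ for $0<s<s_0$; this forces $s_0<1$, and the order in which the three constants are chosen is the only subtlety here. For $x\in E$ let $g(x)$ be the smallest good scale and $\rho(x)=\lambda^{g(x)}r$, so $r\le\rho(x)\le Dr$ and the moat $E\cap B(x,\lambda\rho(x))\setminus B(x,\rho(x))=\emptyset$ holds.

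The single geometric fact that makes everything work is this: if $x,y\in E$, $\rho(x)\le\rho(y)$ and $B(x,\lambda\rho(x)/3)\cap B(y,\lambda\rho(y)/3)\ne\emptyset$, then $d(x,y)<\tfrac{\lambda}{3}(\rho(x)+\rho(y))\le\tfrac{2\lambda}{3}\rho(y)<\lambda\rho(y)$, and since $x\in E$ the moat at $y$ forces $d(x,y)\le\rho(y)$, i.e.\ $x\in B(y,\rho(y))$ --- the \emph{small} ball of $y$. Thus, whenever an enlarged ball is rejected because it meets an already-chosen enlarged ball of at least as large a radius, the corresponding point is automatically covered by the small ball of the older one.

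Accordingly I would select the centres in order of decreasing $\rho$. Since $\rho(\cdot)$ takes only the values $r,\lambda r,\dots,\lambda^N r$, process in rounds $k=N,N-1,\dots,0$: having built the disjoint families $\mathcal I_N,\dots,\mathcal I_{k+1}$, let $\mathcal I_k$ be a maximal subfamily (Zorn) of $\{x\in E:\ g(x)=k\text{ and }B(x,\lambda^{k+1}r/3)\text{ is disjoint from every ball chosen in rounds }>k\}$ such that the balls $B(x,\lambda^{k+1}r/3)$, $x\in\mathcal I_k$, are pairwise disjoint. Let $\{x_1,\dots,x_m\}=\bigcup_{k=0}^N\mathcal I_k$ and $\rho_i=\rho(x_i)$. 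The balls $B(x_i,\lambda\rho_i/3)$ are pairwise disjoint by construction; each contains $B(x_i,r)$, and since disjointness of the enlarged balls gives $d(x_i,x_j)\ge\tfrac{2\lambda}{3}r>2r$ the balls $B(x_i,r)$ are disjoint as well, so $mr^s\le\sum_i\mu(B(x_i,r))\le\mu(E)\le\mu(B(x_1,d(E)))\le Cd(E)^s$, which gives $m\le Cd(E)^s/r^s$ and in particular makes each $\mathcal I_k$ finite. Also $r\le\rho_i\le Dr$, and the moat at each $x_i$ is inherited from the first step. For the covering, take $x\in E$ and let $k=g(x)$: either $B(x,\lambda^{k+1}r/3)$ meets a ball chosen in some round $k'>k$, where $\rho(x)<\rho(y)$ and the geometric fact of the previous paragraph applies; or $x$ was eligible in round $k$ and is either itself chosen, or rejected, in which case by maximality $B(x,\lambda^{k+1}r/3)$ meets $B(y,\lambda^{k+1}r/3)$ for some $y\in\mathcal I_k$ with $\rho(y)=\rho(x)$ and again the geometric fact applies. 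In all cases $x\in B(x_i,\rho_i)$ for some $i$, so $E\subset\bigcup_{i=1}^m B(x_i,\rho_i)$.

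The main obstacle, I expect, is not any single estimate but the realisation that the construction of Lemma~\ref{rings} cannot simply be repeated: with wide moats a ball selected early may have radius far smaller than one that ought to be selected later, so that an early centre can end up deep inside a later small ball, destroying disjointness of the enlarged balls. Processing the large scales first, combined with the one-line moat argument of the second paragraph, is exactly what repairs this; the constant-bookkeeping in the first paragraph is then routine.
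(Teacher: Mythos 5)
Your proposal is correct and follows essentially the same route as the paper: both arguments first attach to every $x\in E$ an empty annulus $E\cap B(x,\lambda\rho(x))\setminus B(x,\rho(x))=\emptyset$ with $r\le\rho(x)\le Dr$ via a measure count over nested $\lambda$-annuli (your alternate-annuli count versus the paper's geometric-series estimate is only bookkeeping), and then run a largest-radius-first greedy selection in which the empty annulus converts intersection of the $\lambda\rho/3$-enlarged balls into containment in the small ball --- the paper builds the covering into the selection and derives disjointness, while you build in disjointness and derive the covering, a purely dual organization. The one small slip is the clause that disjointness of the enlarged balls forces $d(x_i,x_j)\ge 2\lambda r/3$, which need not hold in a general metric space, but it is harmless since $B(x_i,r)\subset B(x_i,\lambda\rho_i/3)$ already gives the disjointness of the balls $B(x_i,r)$ needed for the bound $m\le C\,d(E)^s/r^s$.
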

\begin{proof} The function $s\mapsto(1-3C\lambda^{2s}(\lambda^s-1))^{-1/s}$
is positive and increasing in some interval $(0,s_1)$, so it is bounded in some interval $(0,s_0)$. 
We choose $s_0$ and $D$ so that 
$$\lambda(1-3C\lambda^{2s}(\lambda^s-1))^{-1/s}\leq D\ \text{for}\ 0<s<s_0.$$
Set $c=\log D/\log\lambda$.

Let $x\in E$ and denote 
$$A_0=B(x,r), A_i=B(x,\lambda^ir)\setminus B(x,\lambda^{i-1}r), i=1,2,\dots.$$
If $E\cap A_1=\emptyset$, denote $r(x)=r$. Otherwise, let $l$ be the largest positive integer such that $l\leq c$ 
and that $E\cap A_i\not=\emptyset$ for $1=1,\dots,l$.
Then for $i=1,\dots,l$ there is $y_i\in E\cap A_i$ with 
$B(y_i,\lambda^{i-2}r)\subset A_{i-1}\cup A_i\cup A_{i+1}$. By the choice of $c$, $\lambda^{l-2}r<Dr<R$. 
Hence 
\begin{align*}
&r^s\lambda^{-s}\frac{\lambda^{sl}-1}{\lambda^s-1}=r^s\sum_{i=1}^l\lambda^{s(i-2)}\leq\\
&\sum_{i=1}^l\mu(B(y_i,\lambda^{i-2}r))\leq\sum_{i=1}^l\mu(A_{i-1}\cup A_i\cup A_{i+1})\leq\\
&3\mu(E\cap B(x,\lambda^{l+1}r))\leq3C\lambda^{s(l+1)}r^s.
\end{align*}
This gives
$$(1-3C\lambda^{2s}(\lambda^s-1))\lambda^{sl}\leq1,$$
whence
$$\lambda^{l+1}\leq\lambda(1-3C\lambda^{2s}(\lambda^s-1))^{-1/s}\leq D.$$
Thus $l+1\leq c$ and we conclude that $E\cap A_{l+1}=\emptyset$. 
Let $r(x)=\lambda^lr$. We have now shown that for any $x\in E$ there is $r(x), r\leq r(x)\leq Dr$, such that 
$E\cap B(x,\lambda r(x))\setminus B(x,r(x))=\emptyset.$

Let $M_1=\sup\{r(x):x\in E\}$. Choose $x_1\in E$ with $r(x)>M_1/2$, and then inductively 
$$x_{j+1}\in E\setminus\bigcup_{i=1}^jB(x_i,r(x_i))\ \text{with}\ r(x_{j+1})>M_1/2$$
as long as possible. Thus we get points $x_i\in E$ and radii $r(x_i), r\leq r(x_i)\leq Dr,$ 
for $i=1,\dots,k_1$ such that $r(x_i)/2\leq r(x_j)\leq 2r(x_i)$, $x_j\not\in B(x_i,r(x_i))$ for $i<j$, and 
$$\{x\in E: r(x)>M_1/2\}\subset\bigcup_{i=1}^{k_1}B(x_i,r(x_i)).$$
If for some $l=1,2,\dots$ the points $x_1,\dots,x_{k_l}$ have been selected and
$E\setminus\bigcup_{i=1}^{k_l}B(x_i,r(x_i))\not=\emptyset$, let 
$$M_{l+1}=\sup\{r(x):x\in E\setminus\bigcup_{i=1}^{k_l}B(x_i,r(x_i))\},$$
choose $x_{k_{l+1}}\in E\setminus\bigcup_{i=1}^{k_l}B(x_i,r(x_i))$ with $r(x_{k_{l+1}})>M_{l+1}/2$, and so on. 
This process will end for some $l=p$. Thus we get points $x_1,\dots,x_m\in E, m=k_p$,  
such that, with $\rho_i=r(x_i), r\leq \rho_i\leq Dr$, for $i<j, x_j\not\in B(x_i,\rho_i)$ and 
$r_j\leq 2\rho_i$,  
$$E\subset\bigcup_{i=1}^mB(x_i,\rho_i)\ \text{and}\ E\cap B(x_i,\lambda \rho_i)\setminus B(x_i,\rho_i)=\emptyset.$$
To show that the balls $B(x_i,\lambda\rho_i/3)$ are disjoint, let $i<j$. 
Then $\rho_j\leq 2\rho_i$ and $x_j\in E\cap(\R^n\setminus B(x_i,\rho_i))=E\cap(\R^n\setminus B(x_i,\lambda \rho_i))$. 
So $d(x_i,x_j)>\lambda \rho_i$ and $(\lambda/3)(\rho_i+\rho_j)\leq\lambda \rho_i<d(x_i,x_j)$, which implies that 
$B(x_i,\lambda\rho_i/3)\cap B(x_j,\lambda\rho_j/3)=\emptyset$. The required estimate $m\leq Cd(E)^s/r^s$ follows as before.
\end{proof}

\section{Bilipschitz maps}

In this section we begin to prove the bilipschitz equivalences mentioned in the introduction. It is easy to get explicit 
bounds for the bilipschitz constants of the maps from the proofs. In Theorem \ref{bilip1} bilip$(f)$ is bounded by a 
constant 
depending only on $s,t,n$ and $C_E$. In Theorems \ref{bilip2} and \ref{bilip3}, if $C_E, C_F$ and  $d(E)/d(F)$ 
(interpreted as 
0 if $F$ is unbounded) are all $\leq C$, then bilip$(f)\leq L$ where $L$ depends only on $s,t$ and $C$, and 
also on $n$ in Theorem 
\ref{bilip3}. If $E$ and $F$ are bounded, this dependence on the diameters is seen by first observing that we may assume 
that $d(F)\leq d(E)$; otherwise $F$ can be replaced in the 
proofs by $F\cap B(p,d(E)/2)$ for any $p\in F$. Secondly, changing the metrics to $d_E(x,y)=d(x,y)/d(E)$ and 
$d_F(x,y)=d(x,y)/d(F)$, we have $d(E)=d(F)=1$, the regularity constants don't change and a bilipschitz constant $L$ 
changes to $Ld(E)/d(F)$. 

For any $0<t<n$ 
we shall define some standard $t$-dimensional Cantor sets in $\R^n$. Define $0<d<1/2$ by $2^nd^t=1$. Let 
$Q\subset\R^n$ be a closed cube of side-length $a$. Let $Q_1,\dots,Q_{2^n}\subset Q$ be the closed cubes of side-length 
$da$ in the corners of $Q$. Continue this process. Then $C(t,a)$ is defined as 
$$C(t,a)=\bigcap_{k=1}^{\infty}\bigcup_{i_1\dots i_k}Q_{i_1\dots i_k},$$
where $i_j=1,\dots 2^n$ and each $Q_{i_1\dots i_k}$ is a closed cube of sidelength $d^ka$ such that 
$Q_{i_1\dots i_ki}, i=1,\dots,2^n$, are contained in the corners of $Q_{i_1\dots i_k}$. It is well known and easy to 
prove that $C(t,a)$ is $t$-regular, it is also a particular case of a self similar set satisfying the open set as 
considered in [H].

\begin{thm}\label{bilip1}
Let $E\subset X$ be a bounded $s$-regular set and $0<t<s$. Then there is a $t$-regular subset $F$ of $E$ and 
a bilipschitz map $f:F\to C(t,d(E))$ where $C(t,d(E))$ is a Cantor subset of $\R^n$ with $t<n$ as above. 
Moreover, $C_F\leq C$ where $C$ depends only $s,t,n$ and $C_E$.
\end{thm}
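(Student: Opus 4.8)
The plan is to build inside $E$ a Moran-type Cantor structure combinatorially isomorphic to the one defining $C(t,d(E))$ and to transport it by the obvious coding map. Write $a=d(E)$ and recall $C(t,a)=\bigcap_k\bigcup_{|\sigma|=k}Q_\sigma$, where $\sigma$ runs over words in the alphabet $\{1,\dots,2^n\}$, $Q_\sigma$ is a cube of side $d^{|\sigma|}a$ with $2^nd^t=1$, the $2^n$ children $Q_{\sigma i}$ sit in the corners of $Q_\sigma$, so that $d(Q_\sigma)=\sqrt n\,d^{|\sigma|}a$ while distinct siblings satisfy $d(Q_{\sigma i},Q_{\sigma j})\ge(1-2d)d^{|\sigma|}a$. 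I want a matching family of nonempty compact sets $F_\sigma\subset E$, indexed by the same words, with $F_{\sigma i}\subset F_\sigma$, each $F_\sigma$ having exactly $2^n$ children, $d(F_\sigma)\le C_1d^{|\sigma|}a$ and $d(F_{\sigma i},F_{\sigma j})\ge C_2d^{|\sigma|}a$ for siblings $i\neq j$, with $C_1,C_2>0$ depending only on the data. Given such a family, the coding maps $\phi\colon\{1,\dots,2^n\}^{\mathbb N}\to F:=\bigcap_k\bigcup_{|\sigma|=k}F_\sigma$ and $\psi\colon\{1,\dots,2^n\}^{\mathbb N}\to C(t,a)$ are bijections (descendants of distinct siblings are disjoint since the siblings are, and diameters tend to $0$), and $f:=\psi\circ\phi^{-1}\colon F\to C(t,a)$ is onto. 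If $y\neq y'$ in $F$ have codes first differing at the $k$-th letter, then $y,y'$ lie in distinct siblings at level $k$ inside a common $F_\sigma$ with $|\sigma|=k-1$; comparing the diameter and separation bounds on both sides gives $C_2/\sqrt n\le d(y,y')/d(f(y),f(y'))\le C_1/(1-2d)$, so $f$ is bilipschitz with a constant of the required kind.

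It remains to construct the $F_\sigma$, and this is where Lemma \ref{balls} and the choice of $n$ enter. Put $F_\emptyset=E$, pick $x_\emptyset\in E$, set $R_0=a$, so $E\subset B(x_\emptyset,R_0)$. Inductively, suppose $F_\sigma=E\cap B(x_\sigma,R_k)$ with $x_\sigma\in E$ and $R_k=d^ka$ has been defined, $|\sigma|=k$. Apply Lemma \ref{balls} with $p=x_\sigma$, radius $R=R_k(1-d)\le d(E)$ and small radius $r=3dR_k$ (which is $<R$ when $d<1/4$): this yields at least $(5^sC_E)^{-1}\bigl((1-d)/(3d)\bigr)^s$ disjoint closed balls $B(y_i,3dR_k)$ with $y_i\in E\cap B(x_\sigma,R_k(1-d))$. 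The crucial point is that, because $s>t$, the quantity $(5^sC_E)^{-1}\bigl((1-d)/(3d)\bigr)^sd^{\,t}$ tends to $\infty$ as $d\to0$, so choosing $n$ large enough (depending only on $s,t,C_E$, and with $n>t$, $d=2^{-n/t}<1/4$) forces this count to be $\ge 2^n=d^{-t}$. Keep any $2^n$ of the balls, relabel their centers $x_{\sigma i}$, and set $F_{\sigma i}=E\cap B(x_{\sigma i},R_{k+1})$ with $R_{k+1}=dR_k$; since $R_{k+1}=\tfrac13\cdot 3dR_k$, the ``shrink by $3$'' turns disjointness of the $3dR_k$-balls into a quantitative gap $d(F_{\sigma i},F_{\sigma j})\ge 6dR_k-2R_{k+1}=4d^{k+1}a$, while $B(x_{\sigma i},R_{k+1})\subset B(x_\sigma,R_k)$ gives $F_{\sigma i}\subset F_\sigma$ and $d(F_{\sigma i})\le 2d^{k+1}a$. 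This produces the family with $C_1=2$, $C_2=4$ (at level $0$ one uses $d(E)=a$ directly).

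Finally, $F$ is $t$-regular with controlled $C_F$: one may invoke that a bilipschitz image of the $t$-regular set $C(t,a)$ is $t$-regular, tracking constants; or, more directly, the tree carries a natural measure $\nu$ on $F$ with $\nu(F_\sigma)=2^{-n|\sigma|}=d^{\,t|\sigma|}$ (consistent since $\sum_i\nu(F_{\sigma i})=\nu(F_\sigma)$), and the diameter/separation bounds force, for $x\in F$ and $0<r\le a$, that $B(x,r)$ contains the level-$k$ set through $x$ once $2d^ka\le r$ and meets at most $2^n+1$ level-$k$ sets once $r<2d^{k-1}a$; choosing $k$ minimal with $2d^ka\le r$ yields $\nu(B(x,r))\asymp(r/a)^t$ with constants depending only on $n$ and $d$, hence only on $s,t,C_E$, so a suitable multiple of $\nu$ exhibits $F$ as $t$-regular with $C_F$ as claimed.

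The main obstacle is the bookkeeping in the inductive step: one needs, simultaneously and uniformly over all levels, (i) at least $2^n$ children, (ii) each child ball inside the parent ball, and (iii) sibling separation comparable to the children's own size. Requirement (i) is where $s>t$ is used and forces $n$ large (equivalently the contraction ratio $d$ small) in terms of $s,t,C_E$; requirement (iii) is why Lemma \ref{balls} is applied at a scale a fixed factor larger than the children's radius and the resulting balls are then shrunk. Once the numerology is arranged so that all three hold with constants independent of the level, the rest --- bijectivity of the coding, the bilipschitz estimate, and the regularity of $F$ --- is routine.
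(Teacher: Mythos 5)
Your construction is essentially the paper's: iterate Lemma \ref{balls} to produce, inside each ball of the previous generation, at least $d^{-t}$ points of $E$ whose balls at a fixed multiple of the child scale are disjoint, match this tree with the cube tree of $C(t,d(E))$, pass to the limit of the level maps, and transport $t$-regularity through the bilipschitz map (or via the natural tree measure). The one substantive deviation is how you force the count $(5^sC_E)^{-1}(R/r)^s$ to exceed the branching number: you make $d=2^{-n/t}$ small by taking the ambient dimension $n$ of the Cantor set large, whereas the theorem is meant for an arbitrary prescribed $n>t$ (which is why the constant $C$ is allowed to depend on $n$); the paper instead keeps $n$ fixed and blocks $N$ generations, working with branching $2^{Nn}$ and ratio $d=2^{-Nn/t}$ with $N$ large. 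As written, your argument yields the bilipschitz copy of $C(t,d(E))\subset\R^n$ only for one sufficiently large $n$ depending on $s,t,C_E$; replacing your $2^n$, $d=2^{-n/t}$ by $2^{Nn}$, $d=2^{-Nn/t}$ (i.e., regarding $N$ generations of $C(t,d(E))$ as a single one) repairs this with no other change to your proof. A small slip: your displayed ratio bounds $C_2/\sqrt n\le d(y,y')/d(f(y),f(y'))\le C_1/(1-2d)$ each miss a factor of $d$, since the level-$k$ separation must be compared with the level-$(k-1)$ diameter on the other side; this only changes the constant, which still depends only on $s,t,n,C_E$, so the conclusion stands.
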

\begin{proof}
We may assume that $d(E)=1$. Choose a sufficiently large integer $N$ so that denoting 
$d=2^{-Nn/t}$, i.e., $2^{Nn}d^t=1$, we have $d<1/3$ and $d^{s-t}<(15^sC_E)^{-1}=:c$. Then we can write $C(t,1)$ as
$$C(t,1)=\bigcap_{k=1}^{\infty}\bigcup_{i_1\dots i_k}Q_{i_1\dots i_k}$$
where each $Q_{i_1\dots i_k}, 1\leq i_j\leq2^{Nn}$, is a closed cube of side-length $d^k$ such that
$Q_{i_1\dots i_ki_{k+1}}\subset Q_{i_1\dots i_k}$. By Lemma \ref{balls} we can find disjoint 
balls $B(x_i,3d), x_i\in E,i=1,\dots,m$, such that $m\geq cd^{-s}>d^{-t}=2^{Nn}$. Now we keep the 
first $2^{Nn}$ points $x_i$ and forget about the others. Repeating this argument with $E$ replaced by $E\cap B(x_i,d)$
and so on, we can choose points 
$$x_{i_1\dots i_ki_{k+1}}\in E\cap B(x_{i_1\dots i_k},d^k), 1\leq i_j\leq2^{Nn},$$
such that the balls $B(x_{i_1\dots i_ki},3d^{k+1}), i=1,\dots,2^{Nn}$, are disjoint subsets of $B(x_{i_1\dots i_k},3d^k)$.
Then for $1\leq l<k$,
\begin{equation}\label{1}
d(x_{i_1\dots i_l},x_{i_1\dots i_k})\leq\sum_{j=l}^{k-1}d(x_{i_1\dots i_j},x_{i_1\dots i_{j+1}})
\leq\sum_{j=l}^{k-1}d^j<2d^l
\end{equation}
as $d<1/2$. Denote
$$F=\bigcap_{k=1}^{\infty}\bigcup_{i_1\dots i_k}B(x_{i_1\dots i_k},3d^k).$$
Then $F\subset E$. Let $y_{i_1\dots i_k}$ be the center of $Q_{i_1\dots i_k}$ and denote
$$F_k=\{x_{i_1\dots i_k}:i_j=1,\dots,2^{Nn}, j=1,\dots,k\}$$
and
$$C_k=\{y_{i_1\dots i_k}:i_j=1,\dots,2^{Nn}, j=1,\dots,k\}.$$
Define the maps 
$$f_k:F_k\to C_k\ \text{by}\ f(x_{i_1\dots i_k})=y_{i_1\dots i_k}.$$

We check now that $f_k$ is bilipschitz with a constant depending only on $s,t,n$ and $C_E$. Let 
$x=x_{i_1\dots i_k}, x'=x_{j_1\dots j_k}\in F_k$ with $x\not=x'$. Let $l\geq1$ be such that $i_1=j_1,\dots,i_l=j_l$ and 
$i_{l+1}\not=j_{l+1}$; if $i_1\not=j_1$ the argument is similar. 
Then by (\ref{1}) $x\in B(x_{i_1\dots i_li_{l+1}},2d^{l+1})\cap B(x_{i_1\dots i_l},2d^l)$ and 
$x'\in B(x_{j_1\dots j_lj_{l+1}},2d^{l+1})\cap B(x_{i_1\dots i_l},2d^l)$. Since the balls  
$B(x_{i_1\dots i_li_{l+1}},3d^{l+1})$ and $B(x_{j_1\dots j_lj_{l+1}},3d^{l+1})$ are disjoint, we get
that $d^{l+1}\leq d(x,x')\leq4d^l$. Letting $y=y_{i_1\dots i_k}$ and $y'=y_{j_1\dots j_k}$ we see from the 
construction of $C(t,1)$ that $(1-2d)d^l\leq |y-y'|\leq \sqrt{n}d^l$. Hence
$$d(f_k(x),f_k(x'))=|y-y'|\leq (\sqrt{n}/d)d(x,x')$$
and 
$$d(f_k(x),f_k(x'))=|y-y'|\geq((1-2d)/4)d(x,x').$$
Denote $L=\max\{\sqrt{n}/d,4/(1-2d)\}$.

If $x\in F$ there is a unique sequence $(i_1,i_2,\dots)$ such that $x\in B(x_{i_1\dots i_k},3d^k)$ for all 
$k=1,2,\dots$. Let $y\in C(t,1)$ be the point for which $y\in Q_{i_1\dots i_k}$ for all 
$k=1,2,\dots$. Then $y=\lim_{k\to\infty}y_{i_1\dots i_k}=\lim_{k\to\infty}f_k(x_{i_1\dots i_k})$. We define the map 
$f:F\to C(t,1)$ by setting $f(x)=y$. If also $x'=\lim_{k\to\infty}x_{j_1\dots j_k}$ and 
$y'=\lim_{k\to\infty}y_{j_1\dots j_k}$ we have 
\begin{align*}
&d(f(x),f(x'))=\lim_{k\to\infty}(f_k(x_{i_1\dots i_k}),f_k(x_{j_1\dots j_k}))\\
&\leq\lim_{k\to\infty}Ld(x_{i_1\dots i_k},x_{j_1\dots j_k})=Ld(x,x')
\end{align*}
and similarly $d(f(x),f(x'))\geq d(x,x')/L$. Obviously, $f(F)=C(t,1)$.
The last statement, $C_F\leq C$, of the theorem follows immediately from the fact that $L$ depends only $s,t,n$ and $C_E$.
\end{proof}

Next we turn to study less than one-dimensional sets. 

\begin{thm}\label{bilip2}
Let $E\subset X$ be $s$-regular and $F\subset Y$  $t$-regular with $0<s<1$ and $s<t$. 
Suppose that either $E$ is bounded or both $E$ and $F$ are unbounded. 
Then there is a bilipschitz map $f:E\to f(E)\subset F$. 

\end{thm}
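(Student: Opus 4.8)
The plan is to equip $E$ and a subset of $F$ with a common branching (``tree'') structure, identify $E$ with the set of infinite branches of this tree, and then read off $f$ from two compatible placements of the branch addresses --- one dictated by $E$ via Lemma \ref{rings}, the other chosen freely inside $F$.

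By the normalizations discussed at the beginning of this section we may assume $d(E)=d(F)=1$; I treat the bounded case in detail and indicate the modifications for the unbounded one at the end. Fix a large dilation constant $\Lambda$, set $\delta_k=\Lambda^{-k}$, apply Lemma \ref{rings} to $E$ at scale $\delta_1$, then to each resulting piece at scale $\delta_2$, and so on, and at each level replace the covering ball $B(x_\sigma,\rho_\sigma)$ of a piece by that ball minus the balls of the earlier siblings. This produces, for every finite word $\sigma$, a nonempty closed set $E_\sigma\subset E$ such that the children $E_{\sigma 1},\dots,E_{\sigma m_\sigma}$ partition $E_\sigma$, $d(E_\sigma)\le 2D\delta_{|\sigma|}$ (with $D$ the constant of Lemma \ref{rings}), and $d(E_{\sigma i},E_{\sigma j})\ge\delta_{|\sigma|+1}$ for $i\ne j$ --- the last being exactly what the ``ring'' condition of Lemma \ref{rings} buys us. Using the global bound $\mu(B(x,r))\le 2^sC_Er^s$ together with that ring condition one checks that $\mu$ restricted to each $E_\sigma$ satisfies the hypotheses of Lemma \ref{rings} with the same constants and with $R\asymp\delta_{|\sigma|}$, so the iteration is legitimate provided $\Lambda>2D$, and moreover $m_\sigma\le M:=2^sC_E(2D)^s\Lambda^s$ for all $\sigma$. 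Since $d(E_\sigma)\to0$, each $x\in E$ has a unique address $\xi_x$ with $\{x\}=\bigcap_kE_{\xi_x|_k}$, and if $k=k(x,x')$ is the first coordinate where the addresses of two distinct points $x,x'$ differ, then
$$\delta_k\le d(x,x')\le 2D\Lambda\,\delta_k,$$
i.e. $d(x,x')\asymp\delta_{k(x,x')}$.

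Next I transplant the tree into $F$. Fix a small $\epsilon>0$ and choose $z_\sigma\in F$ inductively: $z_\emptyset\in F$ is arbitrary, and given $z_\tau$ with $|\tau|=k-1$, I use Lemma \ref{balls} inside $F\cap B(z_\tau,\epsilon\delta_k)$ to pick $m_\tau$ points $z_{\tau 1},\dots,z_{\tau m_\tau}\in F$ pairwise separated by some $\gamma_k\asymp\delta_k\,\Lambda^{-s/t}$; this is possible because $t$-regularity of $F$ makes at least $(5^tC_F)^{-1}(\epsilon\delta_k/\gamma_k)^t$ such points available, and for a suitable $\gamma_k$ of that order this number exceeds $M$. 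Since each step displaces a point by at most $\epsilon\delta_k$, the limit $f(x):=\lim_k z_{\xi_x|_k}$ exists and lies in the closed set $F$, and more generally a descendant of a level-$l$ node stays within $2\epsilon\delta_{l+1}$ of it. Applying this at levels $k-1$ and $k$ and using the separation $\gamma_k$ of the two level-$k$ children lying under the common ancestor of $\xi_x$ and $\xi_{x'}$, one obtains
$$\tfrac12\gamma_k\ \le\ d(f(x),f(x'))\ \le\ 4\epsilon\,\delta_k,\qquad k=k(x,x'),$$
the left inequality requiring that $\gamma_k$ dominate $\epsilon\delta_k/\Lambda$, i.e. that $\Lambda^{(t-s)/t}$ be large --- this is the only place the hypothesis $s<t$ is used. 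Comparing with $d(x,x')\asymp\delta_k$, we conclude that $f\colon E\to f(E)\subset F$ is bilipschitz, with $\operatorname{bilip}(f)$ bounded by a constant depending only on $s,t,C_E,C_F$ (and on $d(E)/d(F)$ once the normalization is undone). The unbounded case runs the same construction on a bi-infinite tree, refining towards small scales and coarsening towards large ones: the latter is possible because an unbounded $s$-regular set satisfies $\mu(B(x,r))\ge r^s$ at every scale, so the selection argument in the proof of Lemma \ref{rings} also yields ring-separated decompositions at any prescribed scale $\Lambda^n$, $n\in\Z$, and the comparisons above carry over.

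The crux, and the step I expect to be most delicate, is the joint calibration of $\Lambda$ and $\epsilon$ so that three requirements hold at once: $\Lambda>2D$, so that Lemma \ref{rings} can be iterated on $E$; ``enough room'' in $F$, namely that a ball of radius $\epsilon\delta_k$ contains more than $M$ points that are $\asymp\delta_k\Lambda^{-s/t}$-separated, which is where $t$-regularity of $F$ and the inequality $s<t$ are needed; and that this sibling separation $\asymp\delta_k\Lambda^{-s/t}$ nevertheless dominates the $\asymp\epsilon\delta_k/\Lambda$ diameter of the sub-tree hanging below a node, which again requires $s/t<1$. Verifying that these are simultaneously satisfiable, together with the uniform regularity of the restrictions $\mu|_{E_\sigma}$ that keeps the branching number bounded, is the technical heart of the argument; the hypothesis $s<1$ is used only to make Lemma \ref{rings} available for the construction on the $E$-side.
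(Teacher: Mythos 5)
Your bounded-case argument is essentially the paper's proof: iterate Lemma \ref{rings} to obtain a nested, ring-separated decomposition of $E$ with branching bounded by $\asymp\Lambda^s$, use Lemma \ref{balls} to plant at least that many separated points of $F$ inside balls of comparable scale, and pass to the limit along addresses. The counting step $(\epsilon\delta_k/\gamma_k)^t\gtrsim\Lambda^s$ versus branching $\lesssim\Lambda^s$ is exactly where $s<t$ enters, matching the paper's choice $d^{t-s}<(2^s15^tD^sC_EC_F)^{-1}$; the only cosmetic difference is that you fix a large scale ratio $\Lambda$ and shrink the sibling separation $\gamma_k\asymp\epsilon\delta_k\Lambda^{-s/t}$, whereas the paper fixes a single small parameter $d$ and uses separation $3d^{k+1}$. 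Your two-sided estimates $\delta_k\le d(x,x')\le 2D\Lambda\delta_k$ and $\gamma_k/2\le d(f(x),f(x'))\le 4\epsilon\delta_k$ are the same comparison the paper makes, and the calibration $\Lambda>2D$, $\Lambda^{(t-s)/t}$ large is consistent.

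The genuine gap is in the unbounded case. A bi-infinite tree of nested decompositions at all scales $\Lambda^n$, $n\in\Z$, cannot be produced by the construction you describe: nesting forces the level-$(n+1)$ pieces to arise by subdividing level-$n$ pieces, and with no coarsest level there is no place to start the induction; decompositions built independently at each scale by the selection argument of Lemma \ref{rings} will not be nested, and likewise the placement of the points $z_\sigma$ in $F$ has no root node from which to begin. Making all these choices consistent as $n\to-\infty$ is precisely a compactness/limiting statement, not something that simply ``carries over.'' The paper resolves it by exhaustion: the ring argument around a basepoint $p$ yields radii $R_k$ with $E\cap B(p,R_k+(2D)^k)\setminus B(p,R_k)=\emptyset$, so the truncations $E\cap B(p,R_k)$ are $s$-regular with uniformly bounded constant; the bounded case gives uniformly bilipschitz maps on them, and a limit map is extracted via Arzel\`a--Ascoli. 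You would need either such a limiting argument, or an explicitly anchored increasing-union construction in which the coarse piece containing $p$ and its image point $q\in F$ are held fixed at every level and the bilipschitz estimate is checked for pairs separating at coarse scales; your sketch supplies neither. (The intermediate case $E$ bounded, $F$ unbounded is indeed immediate: replace $F$ by $F\cap B(p,1)$, as the paper notes.)
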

\begin{proof}
We shall first consider the case where both $E$ and $F$ are bounded. By the remarks in the beginning of this section, 
we then may assume that $d(E)=d(F)=1$.  Let $D=(3C_E2^s)^{1/(1-s)}+1$.
Choose $d$ so small that 
$$0<d^{t-s}<(2^s15^tD^sC_EC_F)^{-1}\ \text{and}\ 2Dd<1.$$ 
We shall show that there exist $s$-regular sets 
$E_{i_1\dots i_k}$, points $x_{i_1\dots i_k}\in E, y_{i_1\dots i_k}\in F$ and radii 
$\rho_{i_1\dots i_k}$ where
$$1\leq i_j\leq m_{i_0\dots i_{j-1}}, j=1,\dots,k,\ \text{with}\ m_{i_0\dots i_{j-1}}\leq C_E2^sD^s/d^s, i_0=0,$$
such that for all $k=1,2,\dots,$
\begin{align*}
&E=\bigcup_{i_1\dots i_k}E_{i_1\dots i_k},\\
&E_{i_1\dots i_ki_{k+1}}\subset E_{i_1\dots i_k},\\
&d^k\leq\rho_{i_1\dots i_k}\leq Dd^k,\\
&x_{i_1\dots i_k}\in E_{i_1\dots i_k}\subset B(x_{i_1\dots i_k},\rho_{i_1\dots i_k}),\\
&E\cap B(x_{i_1\dots i_k},\rho_{i_1\dots i_k}+d^k)\setminus B(x_{i_1\dots i_k},\rho_{i_1\dots i_k})=\emptyset,\\
&d(E_{i_1\dots i_k},E_{j_1\dots j_k})\geq d^k\ \text{if}\ i_k\not=j_k,\\
&y_{i_1\dots i_ki_{k+1}}\in F\cap B(y_{i_1\dots i_k},d^k),\\
&B(y_{i_1\dots i_ki_{k+1}},2d^{k+1})\subset B(y_{i_1\dots i_k},2d^k),\\
&B(y_{i_1\dots i_k},3d^k)\cap B(y_{j_1\dots j_k},3d^k)=\emptyset\ \text{if}\ i_k\not=j_k.
\end{align*}

By Lemma \ref{rings} we find $x_i\in E$ and $\rho_i, d\leq \rho_i\leq Dd$, with $i=1,\dots,m_0, m_0\leq C_E/d^s$,
such that the balls $B(x_i,d)$ are disjoint, $x_j\not\in B(x_i,\rho_i)$ for $i<j$, 
$$E\subset\bigcup_{i=1}^{m_0}B(x_i,\rho_i)$$
and
$$E\cap B(x_i,\rho_i+d)\setminus B(x_i,\rho_i)=\emptyset.$$
By Lemma \ref{balls} we find  $y_i\in F$ with 
$i=1,\dots,n_0, n_0\geq (15^tC_Fd^t)^{-1}\geq C_E/d^s\geq m_0$
such that the balls $B(y_i,3d)$ are disjoint. We define
$$E_1=E\cap B(x_1,\rho_1)\ \text{and}\ E_i=E\cap B(x_i,\rho_i)\setminus\bigcup_{j=1}^{i-1}E_j\ \text{for}\ i\geq2.$$
Then the required properties for $k=1$ are readily checked.

Suppose then that for some $k\geq1, E_{i_1\dots i_k}, x_{i_1\dots i_k}\in E, 
y_{i_1\dots i_k}\in F$ and $\rho_{i_1\dots i_k}$ have been 
found with the asserted properties. Fix $i_1\dots i_k$.  We shall apply 
Lemma \ref{rings} with $E=E_{i_1\dots i_k}, R=d^k$, $r=d^{k+1}$ and $C=C_E$, recall that $2Dd<1$. Since 
$d(E_{i_1\dots i_k},E\setminus E_{i_1\dots i_k})\geq d^k$, we have 
$E\cap B(x,r)=E_{i_1\dots i_k}\cap B(x,r)$ for $x\in E_{i_1\dots i_k}$ and $0<r<d^k$, so this is possible. 
Thus we obtain $x_{i_1\dots i_ki}\in E_{i_1\dots i_k}$ and $\rho_{i_1\dots i_ki}, i=1,\dots,m_{i_0\dots i_k}$, such that
$m_{i_0\dots i_k}\leq C_Ed(E_{i_1\dots i_k})^s/d^{(k+1)s}\leq C_E2^sD^s/d^s$, the balls $B(x_{i_1\dots i_ki},d^{k+1})$ are 
disjoint, $x_{i_1\dots i_kj}\not\in B(x_{i_1\dots i_ki},\rho_{i_1\dotsi_ki})$ for $i<j$,
$$d^{k+1}\leq\rho_{i_1\dots i_ki}\leq Dd^{k+1},$$
$$E_{i_1\dots i_k}\subset \bigcup_{i=1}^{m_{i_0\dots i_k}}B(x_{i_1\dots i_ki},\rho_{i_1\dots i_ki})$$
and
$$E\cap B(x_{i_1\dots i_ki},\rho_{i_1\dots i_ki}+d^{k+1})\setminus B(x_{i_1\dots i_ki},\rho_{i_1\dots i_ki})=\emptyset.$$
Define
$$E_{i_1\dots i_k1}=E_{i_1\dots i_k}\cap B(x_{i_1\dots i_k1},\rho_{i_1\dots i_k1})$$
and
$$E_{i_1\dots i_ki}=E_{i_1\dots i_k}\cap B(x_{i_1\dots i_ki},\rho_{i_1\dots i_ki})
\setminus\bigcup_{j=1}^{i-1}E_{i_1\dots i_kj}\ 
\text{for}\ i\geq2.$$
Applying Lemma \ref{balls} we find points $y_{i_1\dots i_ki}\in F\cap B(y_{i_1\dots i_k},d^k), i=1,\dots,n_{i_0\dots i_k},$ 
with $n_{i_0\dots i_k}\geq(15^tC_Fd^t)^{-1}\geq C_E2^sD^s/d^s\geq m_{i_0\dots i_k}$ such that the balls 
$B(y_{i_1\dots i_ki},3d^{k+1}),i=1,\dots,n_{i_0\dots i_k}$, are disjoint. 
Then the required properties are easily checked.

Set
$$A_k=\{x_{i_1\dots i_k}:i_j=1,\dots,m_{i_0\dots i_{j-1}}, j=1,\dots,k\}$$
and
$$B_k=\{y_{i_1\dots i_k}:i_j=1,\dots,m_{i_0\dots i_{j-1}}, j=1,\dots,k\}.$$
Define the maps
$$f_k:A_k\to B_k\ \text{by}\ f(x_{i_1\dots i_k})=y_{i_1\dots i_k}.$$
We check now that $f_k$ is bilipschitz with a constant depending only on $s,t,C_E$ and $C_F$. Let 
$x=x_{i_1\dots i_k}, x'=x_{j_1\dots j_k}\in A_k$ with $x\not=x'$. Let $l\geq1$ be such that $i_1=j_1,\dots,i_l=j_l$ and 
$i_{l+1}\not=j_{l+1}$; if $i_1\not=j_1$ the argument is similar. Then, as in (3.2) in the proof of Theorem \ref{bilip1},
$x\in E_{i_1\dots i_li_{l+1}}\cap B(x_{i_1\dots i_l},2Dd^l)$ and 
$x'\in E_{j_1\dots j_lj_{l+1}}\cap B(x_{i_1\dots i_l},2Dd^l)$. Since 
$d(E_{i_1\dots i_li_{l+1}},E_{j_1\dots j_lj_{l+1}})\geq d^{l+1}$, we get
that $d^{l+1}\leq d(x,x')\leq4Dd^l$. Letting $y=y_{i_1\dots i_k}$ and $y'=y_{j_1\dots j_k}$, we have 
$y\in B(y_{i_1\dots i_li_{l+1}},2d^{l+1})\cap B(y_{i_1\dots i_l},2d^l)$ and 
$y'\in B(y_{j_1\dots j_lj_{l+1}},2d^{l+1})\cap B(y_{i_1\dots i_l},2d^l)$.  
Hence, as $B(y_{i_1\dots i_li_{l+1}},3d^{l+1})\cap B(y_{j_1\dots j_lj_{l+1}},3d^{l+1})=\emptyset$, 
$d^{l+1}\leq d(y,y')\leq4d^l$, 
$$d(f_k(x),f_k(x'))=d(y,y')\leq (4/d)d(x,x')$$
and 
$$d(f_k(x),f_k(x'))=d(y,y')\geq(d/(4D))d(x,x').$$
Denote $L=4D/d>4/d$.

As in the proof of Theorem \ref{bilip1} we define the map $f:E\to f(E)\subset F$ by 
$$f(x)=\lim_{k\to\infty}f_k(x_{i_1\dots i_k})$$
when $x=\lim_{k\to\infty}x_{i_1\dots i_k}$. Then bilip$(f)\leq L$.

If $E$ is bounded and $F$ unbounded, the same proof works with $F$ replaced by $F\cap B(p,1)$ for any $p\in F$.
Suppose $E$ and $F$ are unbounded, and let $p\in E$. Using the proof of Lemma \ref{rings} we find 
$R_k, (2D)^k\leq R_k\leq D(2D)^k, k=1,2,\dots$, such that  
$$E\cap B(p,R_k+(2D)^k)\setminus B(p,R_k)=\emptyset.$$
Let $E_k=E\cap B(p,R_k)$. We check that $E_k$ is $s$-regular with $C_{E_k}\leq(2D)^sC_E$. To see this, let 
$x\in E_k$ and $0<r\leq d(E_k)\leq(2D)^{k+1}$. If $r\leq(2D)^k$, then $E_k\cap B(x,r)=E\cap B(x,r)$, so 
$\mu(E_k\cap B(x,r))\geq r^s$. If $r>(2D)^k$, we have $\mu(E_k\cap B(x,r))\geq(2D)^{ks}\geq(2D)^{-s}r^s$. 
These facts imply that $C_{E_k}\leq(2D)^sC_E$. Since the sets $E_k$ are bounded we can find bilipschitz maps 
$f_k:E_k\to f(E_k)\subset F$ with bilip$(f_k)\leq L$ where $L$ depends only on $s,t,C_E$ and $C_F$. Using 
Arzela-Ascoli theorem we can extract a subsequence $(f_{k_i})$ such that the sequence $(f_{k_i})_{k_i\geq k}$ 
converges on $E_k$ for every $k=1,2,\dots$. Then $f=\lim_{i\to\infty}f_{k_i}:E\to f(E)\subset F$ is 
bilipschitz with bilip$(f)\leq L$.

\end{proof}

\section{mappings in $\R^n$} 
In this section we prove for small dimensional sets in $\R^n$ that we can find bilipschitz 
mappings of the whole $\R^n$. The 
following lemma may be well known, but we have not found a suitable reference in literature.

\begin{lemma}\label{lemma}
Let $0<\delta<c(n)$, where $c(n)<1/2$ is a positive constant depending only on 
$n$ and determined later. Let $p,q\in\R^n$ and $R>0$. For $i=1,\dots,m$ let $\delta R\leq r_i\leq R/3$ and 
$x_i\in B(p,R)$ and $y_i\in B(q,R)$ with 
$B(x_i,3r_i)\cap B(x_j,3r_j)=\emptyset$ and $B(y_i,3r_i)\cap B(y_j,3r_j)=\emptyset$ for $i\not=j$. 
Then there is a bilipschitz map $f:\R^n\to\R^n$ 
such that $f(x)=x-p+q$ for $x\in\R^n\setminus B(p,2R)$ and $f(x)=x-x_i+y_i$ for $x\in B(x_i,r_i)$. 
Moreover, $bilip(f)\leq L$ where $L$ depends only on $n$ and $\delta$.
\end{lemma}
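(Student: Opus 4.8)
The plan is to build $f$ by first moving the "outer" region $B(p,2R)\setminus B(p,R)$ by a suitable radial interpolation carrying the translation $x\mapsto x-p+q$ on $\R^n\setminus B(p,2R)$ to the translation $x\mapsto x-p+q$ restricted to $\partial B(p,R)$ — this is harmless since outside $B(p,R)$ we simply want the rigid shift $x\mapsto x-p+q$ — and then, inside $B(p,R)$, to correct the positions of the small balls one at a time. After the outer shift, we may assume $p=q$ and we must produce a bilipschitz $g:\R^n\to\R^n$, equal to the identity outside $B(p,R)$, with $g(x)=x-x_i+y_i$ on $B(x_i,r_i)$, where now $x_i,y_i\in B(p,R)$, the radii satisfy $\delta R\le r_i\le R/3$, the balls $B(x_i,3r_i)$ are disjoint and likewise the $B(y_i,3r_i)$. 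The translation vectors $v_i:=y_i-x_i$ have $|v_i|\le 2R$.

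The core construction is a single-ball "push": given a point $a$ and radius $r$ with $B(a,3r)$ inside some region where we are otherwise free, and a target displacement $v$, one wants a bilipschitz map supported in $B(a,3r)$ equal to $x\mapsto x-a+(a+v)$ on $B(a,r)$. This cannot be done with $v$ arbitrary inside a fixed ball, so instead one routes the motion through the ambient space: connect $x_i$ to $y_i$ by a path and drag a small ball along it. Concretely, I would cover a polygonal path from $x_i$ to $y_i$ inside $B(p,2R)$ by a chain of overlapping balls of radius comparable to $r_i$ (here is where $r_i\ge\delta R$ enters: the number of balls in each chain is bounded by a constant depending only on $n$ and $\delta$, since the path has length $\le CR$ and each step has length comparable to $\delta R$), and compose elementary "shift by one ball-radius" bilipschitz maps, each supported in a ball of radius $\sim r_i$. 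Each elementary map is an explicit radial bump: on $B(z,r)$ the identity shifted by a vector $w$ with $|w|\le r$, interpolated to the identity on the annulus $B(z,2r)\setminus B(z,r)$ via $x\mapsto x+\varphi(|x-z|/r)\,w$ with $\varphi$ a fixed Lipschitz cutoff, $\varphi=1$ on $[0,1]$, $\varphi=0$ on $[2,\infty)$; for $|w|\le r/4$, say, this is bilipschitz with a dimension-only constant, and composing a bounded number of them keeps the bilipschitz constant bounded in terms of $n$ and $\delta$.

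The remaining issue is that these single-ball pushes for different indices $i$ must not interfere. This is exactly what the separation hypotheses $B(x_i,3r_i)\cap B(x_j,3r_j)=\emptyset$ and $B(y_i,3r_i)\cap B(y_j,3r_j)=\emptyset$ are for — but note the intermediate chains could still cross. The standard fix, and the one I would use, is to perform the pushes inside the product space: lift to $\R^n\times\R$, route the $i$-th chain at height $t=i$ (well separated from the others), do all pushes there, and project back. More elementarily, since only the endpoint configurations are required to be disjoint, one can instead do the correction hierarchically: there are at most $N\le C(n,\delta)$ balls in all (because each $B(x_i,r_i)$ has volume $\ge (\delta R)^n\,\omega_n$ and they are disjoint inside $B(p,2R)$), so $m\le C(n,\delta)$, and one handles the finitely many translations sequentially, at step $i$ working in the complement of the balls already placed and of the balls $B(x_j,r_j)$ not yet treated, using that there is still room of size $\sim\delta R$ to route around them. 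I expect the main obstacle to be precisely this routing/non-interference bookkeeping — making the "drag a ball along a path avoiding finitely many fixed balls" step clean while keeping all constants dependent only on $n$ and $\delta$; choosing $c(n)$ small enough (so that $m$ and all chain lengths are controlled, and so the bump maps stay bilipschitz) is what pins down the constant $c(n)<1/2$ in the statement.
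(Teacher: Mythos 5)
Your reductions are fine as far as they go: passing to $p=q$, the elementary radial bump maps, the bound $m\le C(n)\delta^{-n}$, and the fact that a composition of boundedly many uniformly bilipschitz maps is bilipschitz with controlled constant. But the entire content of the lemma is the simultaneous non-interference of the moves, and that is exactly the step you leave open; moreover the claim you would need there is not true as stated. In your sequential scheme the obstacles at stage $i$ are the balls $B(y_j,r_j)$ already placed and the balls $B(x_j,r_j)$ not yet treated, and your chain for index $i$ must stay clear of all of them. Nothing in the hypotheses separates the $x$-family from the $y$-family: an $x$-ball and a $y$-ball may overlap or lie at arbitrarily small positive distance, and radii may be as large as $R/3$. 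In particular one can put a closed fence around $x_i$ (a circle for $n=2$, a shell for $n\ge3$) made of alternately $x$-centred and $y$-centred balls of radius $\delta R$ with centres spaced about $\tfrac{15}{4}\delta R$ apart: the same-family centres are then more than $6\delta R$ apart, so the $3$-disjointness inside each family holds, yet the gaps between consecutive fence balls have width about $\tfrac{7}{4}\delta R<2r_i$, so the rigid ball $B(x_i,r_i)$ cannot be dragged through, and $y_i$ can be placed outside the fence. So ``there is still room of size $\sim\delta R$ to route around them'' fails; whether a clever choice of processing order always avoids such traps is precisely the heart of the matter and is not addressed. The alternative repair, lifting to $\R^n\times\R$, routing at different heights and ``projecting back'', does not produce a map of $\R^n$ at all: there is no way to project a bilipschitz self-map of $\R^{n+1}$ to a bilipschitz self-map of $\R^n$. (Note also that for $n=1$ no routing argument of this kind can work.)

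The idea missing from your sketch, and the first step of the paper's proof, is to shrink before moving: one constructs auxiliary bilipschitz maps $g$ and $h$, radial rescalings inside the balls $B(x_i,2r_i)$ and $B(y_i,2r_i)$, taking $B(x_i,r_i)$ and $B(y_i,r_i)$ onto concentric balls of radius $\epsilon=\delta^{2n+3}$, so that after conjugation it suffices to translate each tiny ball $B(x_i,\epsilon)$ onto $B(y_i,\epsilon)$ by a map equal to the identity outside a fixed large ball. Once the prescribed sets are this small they cannot block anything; but the paper in fact avoids path-dragging altogether. A measure estimate on $S^{n-1}$ (at most $C(n)\delta^{-2n}$ pairs, each bad set of directions of measure at most $C(n)\delta^{2n+2}$) produces a direction $\theta$ in which the numbers $\theta\cdot x_i$ are pairwise $5\epsilon$-separated and likewise the $\theta\cdot y_i$; taking $\theta$ as the $n$-th coordinate axis, the map is written down explicitly on the cube $[-2,2]^n$ slab by slab: on the slab $|x^n-t_i|\le\epsilon$ it moves the tiny ball within $[-2,2]^{n-1}$ in the transverse variables by an explicit map fixing the boundary of that cube, on $\epsilon\le|x^n-t_i|\le2\epsilon$ it interpolates that map to the identity, between consecutive slabs it is affine in the $n$-th coordinate, and finally it is extended from the cube to $\R^n$. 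This handles all indices simultaneously, requires no routing or ordering, and yields bilip$(f)\le L(n,\delta)$ directly. If you want to keep the dragging picture you would at a minimum need to add the shrinking step and then prove a quantitative routing statement for the resulting configuration of $2m$ balls of radius $\epsilon$, with the bookkeeping that each chain is the identity near every other prescribed ball; as written, the proposal does not contain this.
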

\begin{proof} We may assume that $p=q=0$ and $R=1$. 
Let $\epsilon=\delta^{2n+3}$. It is enough to construct a bilipschitz map $f:\R^n\to\R^n$  with bilip$(f)\leq L, L$ 
depending only on $n$ and $\delta$, such that $f(x)=x$ for $|x|>3\sqrt{n}$ and $f(x)=x-x_i+y_i$ for 
$x\in B(x_i,\epsilon)$. To see this, consider bilipschitz maps $g,h:\R^n\to\R^n$  with bilipschitz constants 
depending only on $n$ and $\delta$ such that $g(x)=(\epsilon/r_i)(x-x_i)+x_i$ for $x\in B(x_i,r_i), g(x)=x$ 
for $x\in B(0,3/2)\setminus\bigcup_{i=1}^mB(x_i,2r_i)$, 
$h(y)=(\epsilon/r_i)(y-y_i)+y_i$ for $y\in B(y_i,r_i)$, $h(y)=y$ for 
$y\in B(0,3/2)\setminus\bigcup_{i=1}^mB(y_i,2r_i)$, $g(x)=h(x)$ for $|x|>2$ and 
$g(B(0,2))=h(B(0,2))=B(0,3\sqrt{n})$. Then $h^{-1}\circ f\circ g$ has the required properties. 

For the rest of the proof we assume that $n\geq2$, for $n=1$ a much simpler argument works. 
Denote $Q=[-2,2]^{n-1}$. Let $a,b\in B(0,1)\subset\R^{n-1}$. For $v\in\partial B(a,\epsilon)$ 
denote by $v'$ the single point 
in $\partial Q\cap\{t(v-a)+a:t\geq1\}$. Let $g(a,b):Q\to Q$ be the bilipschitz map such that 
$$g(a,b)(x)=x-a+b\ \text{for}\ x\in B(a,\epsilon)$$
and for $v\in\partial B(a,\epsilon)$ $g(a,b)$ maps the line segment 
$[v,v']$, affinely onto the line segment $[v-a+b,v']$. Then $g(a,b)(x)=x$ for $x\in\partial Q$ and $g(a,a)$ 
is the identity map. Moreover, $g(a,b)$ has a bilipschitz constant depending only on $n$.

Now we show that there exists a unit vector $\theta\in S^{n-1}$ such that $|\theta\cdot(x_i-x_j)|>5\epsilon$ 
and $|\theta\cdot(y_i-y_j)|>5\epsilon$ for $i\not=j$. To see this, let $\sigma$ denote the surface measure on 
$S^{n-1}$. We have by some simple geometry (or one can consult \cite{M}, Lemma 3.11) 
$$\sigma(\{\theta\in S^{n-1}: |\theta\cdot(x_i-x_j)|\leq5\epsilon\})\leq C_1(n)|x_i-x_j|^{-1}\epsilon
\leq C_1(n)\delta^{2n+2},$$
and similarly for $y_i, y_j$. There are less than $C_2(n)\delta^{-2n}$ pairs 
$(x_i,x_j)$ and $(y_i,y_j)$, whence 
\begin{align*}
&\sigma(\{\theta\in S^{n-1}: |\theta\cdot(x_i-x_j)|\leq5\epsilon\ \text{or}\  
|\theta\cdot(y_i-y_j)|\leq5\epsilon\ \text{for some}\ i\not=j\})\\
&<\delta,
\end{align*}
if $C_1(n)C_2(n)\delta<1$, which we have taking $c(n)\leq(C_1(n)C_2(n))^{-1}$ in the statement of the theorem. 
Taking also $c(n)\leq\sigma(S^{n-1})$ our $\theta$ exists. We may assume that $\theta=(0,\dots,0,1)$.

Let $t_i$ and $u_i, i=1,\dots,m$, be the $n$'th coordinates of $x_i$ and $y_i$, respectively, 
and let $t_0=u_0=-2, t_{m+1}=u_{m+1}=2$. 
We may assume that $t_i<t_{i+1}$ and $u_i<u_{i+1}$ for $i=0,\dots,m$. Then 
$|t_i-t_j|>5\epsilon$ and $|u_i-u_j|>5\epsilon$ for $i\not=j, i,j=0,\dots,m+1$. For $x=(x^1,\dots,x^n)\in\R^n$, 
let $\tilde x=(x^1,\dots,x^{n-1})$. Let $Q_0=[-2,2]^n$ and for $i=1,\dots,m$, 
$$R_i=\{x\in Q_0: |x^n-t_i|\leq\epsilon\},$$
$$S_i=\{y\in Q_0: |y^n-u_i|\leq\epsilon\}.$$

We shall define $f$ in $Q_0$ with the help of the maps $g(a,b)$ in such a way that it maps $R_i$ onto $S_i$ 
translating $B(x_i,\epsilon)$ onto $B(y_i,\epsilon)$. Between $R_i$ and $R_{i+1}$ $f$ is defined by simple 
homotopies changing $f|R_i$ to $f|R_{i+1}$, and similarly in $Q_0$ 'below' $R_1$ and 'above' $R_m$. Finally 
$f$ can be extended from $Q_0$ to all of $\R^n$ rather trivially. We do this now more precisely. 

Let $x\in Q_0$ and $1\leq i\leq m+1$. We set
\begin{align*}
&f(x)=(g(\tilde{x_i},\tilde{y_i})(\tilde x),x^n-t_i+u_i)\ \text{if}\ |x^n-t_i|\leq\epsilon\ \text{and}\ i\leq m,\\ 
&f(x)=(g((2\epsilon-|x^n-t_i|)/\epsilon)\tilde{x_i},(2\epsilon-|x^n-t_i|)/\epsilon)\tilde{y_i})(\tilde x),
x^n+u_i-t_i)\\ 
&\text{if}\ \epsilon\leq|x^n-t_i|\leq2\epsilon\ \text{and}\ i\leq m,\\
&f(x)=(\tilde x,\frac{x^n-t_{i-1}-2\epsilon}{t_i-t_{i-1}-4\epsilon}(u_i-2\epsilon)+
\frac{t_i-2\epsilon-x^n}{t_i-t_{i-1}-4\epsilon}(u_{i-1}+2\epsilon))\\ 
&\text{if}\ t_{i-1}+2\epsilon\leq x^n\leq t_i-2\epsilon,\\
&f(x)=x\ \text{if}\ -2\leq x^n\leq-2+2\epsilon\ \text{or}\ 2-2\epsilon\leq x^n\leq2.
\end{align*}
Then $f:Q_0\to Q_0$ is bilipschitz with a constant depending only on $n$ and $\delta$, 
$f(x)=x-x_i+y_i$ for $x\in B(x_i,\epsilon)$, $f(x)=x$ for $x\in Q_0$ with 
$x_n=-2$ or $x_n=2$, and at the other parts of the boundary of $Q_0$ $f$ is of the form $f(x)=(\tilde x, \phi(x^n))$ 
where $\phi:[-2,2]\to[-2,2]$ is strictly increasing and piecewise affine. It is an easy matter to extend $f$ 
to a bilipschitz mapping 
of $\R^n$ with a bilipschitz constant depending only on $n$ and $\delta$ and with $f(x)=x$ for 
$x\in\R^n\setminus B(0,3\sqrt{n})$. For example, setting $||\tilde x||_{\infty}=\max\{|x^1|,\dots,|x^{n-1}|\}$, 
we can take 
$$f(x)=(\tilde x, (3-||x||_{\infty})\phi(x^n)+(||x||_{\infty}-2)x^n)$$
when $2\leq||\tilde x||_{\infty}\leq3$ and $|x^n|\leq2$, and $f(x)=x$ when $||\tilde x||_{\infty}>3$ or $|x^n|>2$.
\end{proof}

\begin{thm}\label{bilip3}
Let $C\geq1$ and let $s_0=s_0(C,18), 0<s_0<1/6$, be the constant of Lemma \ref{rings2}. Let 
$0<s<s_0$ and $s<t<n$, let $E\subset\R^n$ be $s$-regular and $F\subset\R^n$ $t$-regular 
with $C_E, C_F\leq C$. 
Suppose that either $E$ is bounded or both $E$ and $F$ are unbounded. 
Then there is a bilipschitz map $f:\R^n\to\R^n$ 
such that $f(E)\subset F$.
\end{thm}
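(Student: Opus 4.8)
The plan is to iterate Lemma \ref{rings2} on both $E$ and $F$ simultaneously, producing a tree of well-separated ``tube-free'' balls, and then to glue together the local translations furnished by Lemma \ref{lemma} across scales. First I would reduce to the case $d(E)=d(F)=1$ by rescaling, and (as in the introductory remarks of Section 3) replace an unbounded $F$ by $F\cap B(p,1)$ when $E$ is bounded; the genuinely unbounded case is handled at the end by an Arzel\`a--Ascoli limit exactly as in the proof of Theorem \ref{bilip2}. Fix $\lambda=18$ and let $s_0,D$ be the constants of Lemma \ref{rings2}; choose a small ratio $d<1/D$ with $d^{t-s}$ small enough (depending on $s,t,n,C$) that the packing bound for $F$ at scale $d$ always beats the covering bound for $E$ at scale $d$, i.e. there are always at least as many admissible $F$-balls as $E$-balls at each step — this is the same counting as in Theorem \ref{bilip2}, using Lemma \ref{balls} for the lower bound on the $F$-side and the bound $m\leq Cd(E)^s/r^s$ from Lemma \ref{rings2} on the $E$-side.

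The construction: applying Lemma \ref{rings2} with $r=d$ to $E$ gives disjoint balls $B(x_i,\lambda\rho_i/3)$, $d\leq\rho_i\leq Dd$, covering $E$ by the $B(x_i,\rho_i)$ with $E\cap(B(x_i,\lambda\rho_i)\setminus B(x_i,\rho_i))=\emptyset$; applying Lemma \ref{balls} to $F$ at scale (comparable to) $d$ gives at least as many disjoint balls $B(y_i,\lambda\rho_i/3)$ inside $B(q,1)$, and I retain just as many of the $y_i$ as there are $x_i$. Define $E_i=E\cap B(x_i,\rho_i)$ (disjointified as in Theorem \ref{bilip2}), note $E_i$ is $s$-regular with $C_{E_i}$ controlled by $D^sC_E$ because of the tube-free property, and recurse inside each $(E_i,B(y_i,\cdot))$ with $r=d^2$, etc. This yields the symbolic data $x_{i_1\dots i_k}$, $y_{i_1\dots i_k}$, $\rho_{i_1\dots i_k}$ with $d^k\leq\rho_{i_1\dots i_k}\leq Dd^k$, nested separated balls, and the key geometric input at every node: around $x_{i_1\dots i_k}$ (resp.\ $y_{i_1\dots i_k}$) the children balls $B(x_{i_1\dots i_k i},\lambda\rho_{i_1\dots i_k i}/3)$ are disjoint and sit inside $B(x_{i_1\dots i_k},\rho_{i_1\dots i_k})$, while $E$ meets no annulus $B(x_{i_1\dots i_k},\lambda\rho_{i_1\dots i_k})\setminus B(x_{i_1\dots i_k},\rho_{i_1\dots i_k})$.

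Now I build $f$ on $\R^n$ scale by scale. At the top scale apply Lemma \ref{lemma} (with $\delta\sim d/D$, which forces the choice $c(n)$ and hence, via $s_0=s_0(C,18)$, the smallness of $s$ demanded in the statement) to get $f_0:\R^n\to\R^n$ with $f_0(x)=x-x_i+y_i$ on each $B(x_i,\rho_i)$ and $f_0=\mathrm{id}$ outside $B(p,2)$. Inside each $B(x_i,\rho_i)$, after the translation by $-x_i+y_i$ the data $\{x_{ij}-x_i+y_i\}$ and $\{y_{ij}\}$ again satisfy the hypotheses of Lemma \ref{lemma} at the rescaled level (radii between $\delta\rho_i$ and $\rho_i/3$, all points in $B(y_i,\rho_i)$, triple balls disjoint on both sides), so I post-compose with a map $f_1$ supported in $\bigcup B(x_i,2\rho_i)\subset\bigcup B(x_i,\lambda\rho_i/3)$ — here the tube-free property and the $\lambda/3$-disjointness are exactly what keep the supports disjoint and keep $f_1$ equal to the identity where $f_0$ has already achieved the desired translation. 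Iterating, $f=\lim_k f_k\circ\cdots\circ f_1\circ f_0$; since each $f_j$ has bilipschitz constant $L_0=L_0(n,\delta)$ and is supported in a union of balls of radius $\sim d^j$ on which the previous composition is an isometry, a telescoping/ping-pong estimate shows the infinite composition converges to a bilipschitz $f$ with $\mathrm{bilip}(f)\leq L(s,t,n,C)$, and on $E$ it sends $x=\lim x_{i_1\dots i_k}$ to $\lim y_{i_1\dots i_k}\in F$.

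\textbf{Main obstacle.} The delicate point is not the symbolic bookkeeping but verifying that the per-scale maps from Lemma \ref{lemma} can be composed with uniformly bounded distortion: one must check that $f_j$ is the identity on the region where $f_{j-1}\circ\cdots\circ f_0$ already equals the correct translation (so that no scale undoes the work of the previous one), and that the ``collars'' $B(x_i,\lambda\rho_i/3)\setminus B(x_i,2\rho_i)$ where the interpolation happens are genuinely empty of deeper $E$-points — this is precisely the role of the strong annular gap $E\cap(B(x_i,\lambda\rho_i)\setminus B(x_i,\rho_i))=\emptyset$ with $\lambda=18$, and it is why Lemma \ref{rings2} (rather than Lemma \ref{rings}) is needed and why $s$ must be taken below $s_0(C,18)$. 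Tracking the geometric constants through this ping-pong to get a single $L$ depending only on $s,t,n,C$ is the part requiring genuine care.
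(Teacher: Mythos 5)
Your construction is essentially the paper's: Lemma \ref{rings2} with $\lambda=18$ on the $E$-side, Lemma \ref{balls} on the $F$-side with a counting condition on $d^{t-s}$, a per-scale application of Lemma \ref{lemma} frozen outside the current balls, and an Arzel\`a--Ascoli reduction for the unbounded case. But the step you label the ``main obstacle'' and then dispatch with ``a telescoping/ping-pong estimate shows the infinite composition converges to a bilipschitz $f$'' is exactly the technical heart of the theorem, and as written it is a genuine gap: composing $L_0$-bilipschitz maps gives a priori $L_0^k$, and your stated mechanism (each $f_j$ acts where the previous composition is an isometry) only yields, after some work, the \emph{upper} Lipschitz bound via displacement estimates. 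It does not by itself give the lower (co-Lipschitz) bound: for a point $x$ just inside a level-$k$ ball $B(x_{i_1\dots i_k},2\rho_{i_1\dots i_k})$ and a point $y$ just outside it, later stages move the image of $x$ by amounts that are not obviously controlled by $|x-y|$, so the image distance could in principle collapse. To rule this out you need two ingredients you never record: (i) the nesting condition $B(x_{i_1\dots i_{k+1}},4\rho_{i_1\dots i_{k+1}})\subset B(x_{i_1\dots i_k},2\rho_{i_1\dots i_k})$, which guarantees that any point subject to deeper corrections is at definite distance from the boundary of its parent ball (so later displacements are proportional to $d(x,\partial B)$ and hence to $|x-y|$); and (ii) an argument for the lower bound, which in the paper is not a telescoping estimate at all but the inductive proof of (\ref{bili}) with one fixed $L$: the upper bound via the boundary-crossing point $z$ on the segment $[x,y]$, and the lower bound by running the identical argument on $f_k^{-1}$ using the image balls $B(y_{i_1\dots i_k},2\rho_{i_1\dots i_k})$ (which requires knowing each $f_k$ maps level-$k$ balls onto the corresponding $F$-side balls). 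Supplying (i) and a fixed-$L$ two-sided induction of this kind is what your proposal is missing.

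A smaller point: on the $F$-side Lemma \ref{balls} produces disjoint balls of a \emph{single} radius, not radii $\lambda\rho_i/3$ varying with $i$; the paper uses $B(y_{i_1\dots i_k},6Dd^k)$, which dominates $3\cdot2\rho_{i_1\dots i_k}$ for every child and is what makes the hypotheses of Lemma \ref{lemma} (triple balls disjoint on both sides, radii between $\delta R$ and $R/3$, hence the constraints $12Dd<1$ and $d<c(n)$) verifiable. This is easily repaired but should be stated correctly, since it feeds into the choice of $d^{t-s}$.
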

\begin{proof} We assume that $E$ and $F$ are bounded. The remaining case can be dealt with as at the end 
of the proof of Theorem \ref{bilip2}. We can then assume that $E,F\subset B(0,1)$ 
with $d(E)=d(F)=1/2$. Let $c(n)$ and $D=D(C,18)$ 
be as in Lemma \ref{rings2}, and choose $d$ such that
$$d<c(n), 12Dd<1\ \text{and}\ 0<d^{t-s}<(2^s60^tC_EC_FD^t)^{-1}.$$
By Lemma \ref{rings2} we find $x_i\in E$ and $\rho_i, d\leq \rho_i\leq Dd$, with 
$i=1,\dots,m_0, m_0\leq C_E/d^s$,
such that the balls $B(x_i,6\rho_i)$ are disjoint, 
$$E\subset\bigcup_{i=1}^{m_0}B(x_i,\rho_i)$$
and
$$E\cap B(x_i,18\rho_i)\setminus B(x_i,\rho_i)=\emptyset.$$
By Lemma \ref{balls} we find  $y_i\in F$ with 
$i=1,\dots,n_0$,\\ 
$n_0\geq (5^tC_F)^{-1}(1/(12Dd))^t\geq C_E/d^s\geq m_0$
such that the balls $B(y_i,6Dd)$, $j=1,\dots,n_0$,  are disjoint. 
Next applying Lemma \ref{rings2} with $E$ replaced by $E\cap B(x_i,\rho_i), R=d, r=d^2$ and $C=C_E$, 
we find for every $i=1,\dots,m_0, x_{ij}\in E\cap B(x_i,\rho_i)$
and $\rho_{ij}, d^2\leq\rho_{ij}\leq Dd^2$, with $j=1,\dots,m_i, 
m_i\leq C_Ed(E\cap B(x_i,\rho_i))^s/d^{2s}\leq C_E2^sD^s/d^s$, 
such that the balls $B(x_{ij},6\rho_{ij})$ are disjoint, 
$$E\cap B(x_i,\rho_i)\subset\bigcup_{j=1}^{m_i} B(x_{ij},\rho_{ij})$$
and
$$E\cap B(x_{ij},18\rho_{ij})\setminus B(x_{ij},\rho_{ij})=\emptyset,$$
and by Lemma \ref{balls} we find  $y_{ij}\in F\cap B(y_i,d), j=1,\dots,n_i$, 
$n_i\geq (5^tC_F)^{-1}(d/(6Dd^2))^t\geq 2^sC_E/d^s\geq m_i$
such that the balls $B(y_{ij},6Dd^2)$ are disjoint. 
Continuing this we find for all $k=1,2,\dots$, $x_{i_1\dots i_k}, \rho_{i_1\dots i_k}$ and 
$y_{i_1\dots i_k}$ such that for all $i_j=m_{i_0\dots i_{j-1}}, j=1,\dots,k, k=1,2,\dots,$ with $i_0=0$,
\begin{align*}
&E\subset\bigcup_{i_1\dots i_k}B(x_{i_1\dots i_k},\rho_{i_1,\dots,i_k}),\\
&B(x_{i_1\dots i_k},6\rho_{i_1\dots i_k})\cap B(x_{j_1\dots j_k},6\rho_{i_1\dots i_k})
=\emptyset\ \text{if}\ i_k\not=j_k,\\
&x_{i_1\dots i_ki_{k+1}}\in E\cap B(x_{i_1\dots i_k},\rho_{i_1\dots i_k}),\\
&d^k\leq\rho_{i_1\dots i_k}\leq Dd^k,\\
&B(x_{i_1\dots i_ki_{k+1}},4\rho_{i_1\dots i_ki_{k+1}})\subset B(x_{i_1\dots i_k},2\rho_{i_1\dots i_k}),\\
&E\cap B(x_{i_1\dots i_k},18\rho_{i_1\dots i_k})\setminus B(x_{i_1\dots i_k},\rho_{i_1\dots i_k})=\emptyset,\\
&y_{i_1\dots i_ki_{k+1}}\in F\cap B(y_{i_1\dots i_k},d^k),\\
&B(y_{i_1\dots i_k},6Dd^k)\cap B(y_{j_1\dots j_k},6Dd^k)=\emptyset\ \text{if}\ i_k\not=j_k.
\end{align*}

Using Lemma \ref{lemma} we find a bilipschitz map $f_1:\R^n\to\R^n$  
such that $f_1(x)=x$ for $|x|>2$ and $f_1(x)=x-x_i+y_i$ for $x\in B(x_i,2\rho_i)$, and 
bilip$(f)\leq L$ where $L$ depends only on $s, t, n$ and $C$.
Let
$$B_k=\bigcup_{i_1\dots i_k}B(x_{i_1\dots i_k},2\rho_{i_1,\dots,i_k}).$$
Then $B_{k+1}\subset B_k$ for all $k$ and $E=\bigcap_{k=1}^{\infty}B_k$.
We use Lemma \ref{lemma} to define inductively $f_k:\R^n\to\R^n$  
such that $f_{k+1}(x)=f_k(x)$ for $x\in\R^n\setminus B_k^o$, where $B_k^o$ is the interior of $B_k$, 
$f_{k+1}|B(x_{i_1\dots i_k},2\rho_{i_1,\dots,i_k})$ is $L$-bilipschitz 
and $f_{k+1}(x)=x-x_{i_1,\dots,i_{k+1}}+y_{i_1,\dots,i_{k+1}}$ for 
$x\in B(x_{i_1\dots i_{k+1}},2\rho_{i_1,\dots,i_{k+1}})$.
We check now by induction that
\begin{equation}\label{bili}
|x-y|/L\leq|f_k(x)-f_k(y)|\leq L|x-y|\ \text{for all}\ x,y\in\R^n.
\end{equation}
For $k=1$ this was already stated. Suppose this is true for $k-1$ for some $k\geq2$ and let $x,y\in\R^n$.
If  $x,y\in\R^n\setminus B_k^o$, (\ref{bili}) follows from the definition of $f_k$ and the induction hypothesis.
If $x,y\in B(x_{i_1\dots i_k},2\rho_{i_1,\dots,i_k})$ for some $i_1\dots i_k$, then (\ref{bili}) follows from 
the fact that $f_k$ is a translation in $B(x_{i_1\dots i_k},2\rho_{i_1,\dots,i_k})$. 
Finally, let $x\in B(x_{i_1\dots i_k},2\rho_{i_1,\dots,i_k})$ and 
$y\in\R^n\setminus B(x_{i_1\dots i_k},2\rho_{i_1,\dots,i_k})$.
Let $z\in\partial B(x_{i_1\dots i_k},2\rho_{i_1,\dots,i_k})$ be the point on the line segment with end points 
$x$ and $y$. Then, by the two previous cases,  
\begin{align*}
&|f_k(x)-f_k(y)|\leq|f_k(x)-f_k(z)|+|f_k(z)-f_k(y)|\leq\\
& L|x-z|+L|z-y|=L|x-z|.
\end{align*}
This proves the right hand inequality of (\ref{bili}). A similar argument for $f_k^{-1}$ with the balls 
$B(y_{i_1\dots i_k},2\rho_{i_1,\dots,i_k})$ gives the left hand inequality.

We have left to show that the limit $\lim_{k\to\infty}f_k(x)=f(x)$ exists for all $x\in\R^n$. Then also $f$ satisfies 
(\ref{bili}) and $f(E)\subset F$. First, if $x\in\R^n\setminus E$, then  $x\in\R^n\setminus B_l$ for some 
$l$, and so $f_k(x)=f_l(x)$ for $k\geq l$. If $x\in E$, there are $i_1,i_2,\dots,$ such that 
$x\in B(x_{i,\dots i_k},2\rho_{i_1\dots i_k})$ for all $k$. Then $f_k(x)\in B(y_{i_1\dots i_k}, 2Dd^k)$ and 
$\lim_{k\to\infty}f_k(x)=y=f(x)$ where $y=\lim_{k\to\infty}y_{i_1\dots i_k}$.

\end{proof}

\section{sub- and supersets}
In this section we shall consider the question whether a given regular set contains regular subsets 
of smaller dimension and whether it is contained in higher dimensional regular sets.

\begin{thm}\label{subset} Let $E\subset X$ be $s$-regular and $0<t<s$. For every $x\in E$ and $0<r<d(E)$, 
$E\cap B(x,r)$ contains a $t$-regular subset $F$ such that $C_F\leq C$ and $d(F)\geq cr$ where $C$ and $c$ are 
positive constants depending only on $s,t$ and $C_E$.
\end{thm}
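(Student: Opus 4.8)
The statement is essentially a "built-in" consequence of the construction in Theorem \ref{bilip1}, and the plan is to reduce to it. First I would normalize: replace $E$ by $E\cap B(x,r)$. This set need not itself be $s$-regular, but it is "$s$-regular at scales up to $r$" in the following sense: the upper bound $\mu(B(y,\rho))\le C_E\rho^s$ holds for all $y,\rho$, while the lower bound $\mu(B(y,\rho))\ge\rho^s$ holds for $y\in E\cap B(x,r)$ and $0<\rho\le r$ (since for such $\rho$ we have $B(y,\rho)\cap E\subset E\cap B(x,2r)$ and one can use the original regularity of $E$, possibly after shrinking $r$ by a harmless constant factor, e.g. working inside $B(x,r/2)$ so that $B(y,\rho)$ for $\rho\le r/2$ stays well inside). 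Lemma \ref{balls} only uses exactly these two one-sided estimates at the relevant scales, so it is available for $E\cap B(x,r)$ down to scale comparable to $r$.

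Next I would run the Cantor-tree construction from the proof of Theorem \ref{bilip1} verbatim but truncated in the following way: choose the integer $N$ (hence $d=2^{-Nn/t}$) exactly as there, depending only on $s,t,n,C_E$, so that $d<1/3$ and $d^{s-t}<(15^sC_E)^{-1}$. Starting from $p=x$ and the ball $B(p,cr)$ with $c$ a small constant (to be fixed so that all the balls invoked stay inside the region where the lower mass bound is valid), apply Lemma \ref{balls} repeatedly to produce points $x_{i_1\dots i_k}$ and nested disjoint balls $B(x_{i_1\dots i_k},3(cr)d^k)$ with $1\le i_j\le 2^{Nn}$, exactly as in \eqref{1} and the lines following it. Set
$$F=\bigcap_{k=1}^\infty\bigcup_{i_1\dots i_k}B(x_{i_1\dots i_k},3(cr)d^k)\subset E\cap B(x,r).$$
The bilipschitz map $f_k$ onto the model Cantor set $C(t,cr)\subset\R^n$ is constructed and estimated exactly as in Theorem \ref{bilip1}, giving in the limit a bilipschitz $f:F\to C(t,cr)$ with $\mathrm{bilip}(f)\le L$, $L=L(s,t,n,C_E)$. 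Since $C(t,cr)$ is $t$-regular with $C_{C(t,cr)}$ an absolute constant (depending only on $t,n$), its bilipschitz image $F$ is $t$-regular with $C_F\le C$, $C=C(s,t,n,C_E)$; and $d(F)=(cr)\,d(C(t,1))\ge c' r$ with $c'=c'(s,t,n,C_E)>0$. Since $n$ may be chosen as a function of $s$ and $t$ alone (any $n>t$ works and $N=N(s,t,C_E)$ then determines a valid $n$, e.g. $n=\lceil t\rceil+1$), all the constants depend only on $s,t,C_E$, as required.

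The only genuine point needing care — and the one I would expect to be the main obstacle — is the bookkeeping ensuring the lower mass bound $\mu(B(y,\rho))\ge\rho^s$ remains valid at every scale and location where Lemma \ref{balls} is invoked inside $E\cap B(x,r)$: at the $k$-th stage one needs it for balls of radius $(cr)d^k$ centered at points of $E\cap B(x_{i_1\dots i_{k-1}},(cr)d^{k-1})$, all of which, by \eqref{1}, lie within $B(x,2cr)$, so choosing $c\le 1/4$ keeps every such ball inside $B(x,r)$ and the estimate is inherited from the $s$-regularity of the ambient $E$. Once this is observed, everything else is a transcription of the proof of Theorem \ref{bilip1}, with $r$ playing the role previously played by $d(E)=1$.
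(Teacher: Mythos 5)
Your proposal is correct and is exactly the paper's intended argument: the paper proves Theorem \ref{subset} by remarking that the method of Theorem \ref{bilip1}, run at scale $r$ inside $B(x,r)$, yields a $t$-regular subset of $E\cap B(x,r)$ bilipschitz equivalent to $C(t,r)$ with constants depending only on $s,t,C_E$ (the dimension $n$ being chosen from $t$ alone). Your localization bookkeeping for the lower mass bound and the choice $c\le 1/4$ fill in precisely the details the paper leaves implicit.
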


This can be proven with the same method as Theorem \ref{bilip1}. In fact, that method gives that $E\cap B(x,r)$ 
has a $t$-regular subset which is bilipschitz equivalent with $C(t,r)$ with a bilipschitz constant 
depending only on $s,t$ and $C_E$. Observe that the regularity of $E$ implies 
that $d(E\cap B(x,r))\geq C_E^{-1/s}r$. 

\begin{thm}\label{superset} Let $0<s<t<u$. Suppose that $E\subset X$ is $s$-regular and that $X$ is $u$-regular.
Then there is a $t$-regular set $F$ with $E\subset\ F\subset X$. Moreover, $C_F\leq C$ where $C$ depends only 
on $s, t, C_E$ and $C_X$.
\end{thm}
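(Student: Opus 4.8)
The plan is to build $F$ by filling in the "holes" of $E$ inside $X$ at a family of scales, using a Whitney-type decomposition of $X\setminus E$ and inserting inside each Whitney piece a copy of a fixed $t$-regular set coming from Theorem \ref{subset}. First I would fix a Whitney-type covering of the open set $X\setminus E$: for each $k\in\Z$ let $U_k=\{x\in X: 2^{-k-1}<d(x,E)\le 2^{-k}\}$ (intersected with a large ball, or with all of $X$ if $X$ is bounded; if $X$ is unbounded one truncates, as in Theorems \ref{bilip2} and \ref{bilip3}, and passes to a limit at the end). Using the $5r$-covering lemma together with the upper and lower $u$-regularity bounds for $X$, cover each $U_k$ by boundedly-overlapping balls $B(z,2^{-k-10})$ with $z\in U_k$; call the centers $\{z_{k,j}\}_j$. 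The number of such balls meeting a fixed ball $B(p,\rho)$ with $\rho\gtrsim 2^{-k}$ is $\lesssim (\rho\,2^{k})^{u}$, with constants depending only on $C_X$.

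Next, inside each Whitney ball $B(z_{k,j},2^{-k-10})$ (whose closure lies in $X\setminus E$), apply Theorem \ref{subset} to the $u$-regular space $X$ with exponent $t<u$: this yields a $t$-regular set $G_{k,j}\subset X\cap B(z_{k,j},2^{-k-10})$ with $C_{G_{k,j}}\le C(t,u,C_X)$ and $d(G_{k,j})\ge c\,2^{-k-10}$ for a fixed $c=c(t,u,C_X)>0$. Define
$$F=E\cup\bigcup_{k\in\Z}\bigcup_{j}G_{k,j}.$$
Since each $G_{k,j}$ is closed, contained in $X$, and accumulates only on $E$ (the sets shrink and approach $E$ as $k\to\infty$, while for fixed large negative $k$ there are, locally, finitely many of them), $F$ is closed and $E\subset F\subset X$. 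For the measure witnessing $t$-regularity of $F$ I would take $\nu=\mu_E|_E+\sum_{k,j}\alpha_{k,j}\,\mu_{G_{k,j}}$, where $\mu_E$ is the $s$-regular measure on $E$, $\mu_{G_{k,j}}$ the $t$-regular measure on $G_{k,j}$, and $\alpha_{k,j}$ a normalizing scalar chosen so that $\mu_{G_{k,j}}(G_{k,j})\approx 2^{-kt}$; concretely $\alpha_{k,j}\asymp 2^{-kt}/\mu_{G_{k,j}}(X)$, with comparability constants depending only on $t,u,C_X$. Then $\nu(X\setminus F)=0$.

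The heart of the argument is then the two-sided bound $r^t\lesssim \nu(B(x,r))\lesssim r^t$ for $x\in F$ and $0<r\le d(F)$, uniformly. The lower bound is the easier half: if $x\in E$, pick the integer $k$ with $2^{-k}\approx r$, find a Whitney ball $B(z_{k,j},2^{-k-10})$ within distance $\approx 2^{-k}$ of $x$ (such a ball exists because $d(z,E)\approx 2^{-k}$ points sit near every point of $E$ at that scale, a consequence of $E$ being a proper closed set in the connected-at-all-scales $u$-regular space $X$), and note $B(x,r)$ contains all of $G_{k,j}$, so $\nu(B(x,r))\ge\alpha_{k,j}\mu_{G_{k,j}}(G_{k,j})\gtrsim 2^{-kt}\approx r^t$; if $x\in G_{k,j}$ one uses directly the lower $t$-regularity of $G_{k,j}$ at scales $r\lesssim 2^{-k}$, and the previous case for larger $r$ (since $G_{k,j}$ lies within $\approx 2^{-k}$ of $E$). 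The upper bound is where the main work — and the main obstacle — lies: one must sum
$$\nu(B(x,r))\le \mu_E(B(x,r))+\sum_{2^{-k}\lesssim r}\ \sum_{j:\,B(z_{k,j},2^{-k-10})\cap B(x,r)\ne\emptyset}\alpha_{k,j}\,\mu_{G_{k,j}}(X),$$
bound $\mu_E(B(x,r))\le C_E r^s\le C_E\,d(F)^{s-t}r^t\lesssim r^t$ (here $s<t$ is used, and one normalizes $d(F)\approx1$), and for the tail observe that for each fixed $k$ with $2^{-k}\lesssim r$ the number of relevant Whitney balls is $\lesssim (r\,2^{k})^{u}$ by the $u$-regularity packing bound, each contributing $\alpha_{k,j}\mu_{G_{k,j}}(X)\asymp 2^{-kt}$, so the $k$-th layer contributes $\lesssim r^{u}2^{k(u-t)}$, and $\sum_{2^{-k}\lesssim r}r^{u}2^{k(u-t)}\lesssim r^{u}(1/r)^{u-t}=r^{t}$ since $u>t$ makes the geometric series converge at its top term. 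This is the one genuinely delicate estimate; everything else is bookkeeping about the Whitney decomposition, and the unbounded case is handled by truncating $X$ to $B(p,R_k)$ with $R_k\to\infty$ (as in Theorem \ref{bilip2}) and taking a nested union, the uniform constant making the limit set $t$-regular.
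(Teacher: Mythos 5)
Your overall architecture --- a multiscale (Whitney-type) decomposition of $X\setminus E$, insertion of rescaled $t$-regular pieces obtained from Theorem \ref{subset} into holes at every scale, and summation of the corresponding measures --- is the same as the paper's, but the two estimates you yourself single out as the heart of the matter are carried out incorrectly, and a third step is justified by a false general principle. First, you include $\mu_E$ in $\nu$, and the inequality you use to absorb it, $C_Er^s\le C_E\,d(F)^{s-t}r^t$, is reversed: since $s<t$ and $r\le d(F)$ one has $r^s\ge d(F)^{s-t}r^t$, and indeed $\mu_E(B(x,r))\ge r^s$ with $r^{s-t}\to\infty$ as $r\to0$, so \emph{any} measure dominating $\mu_E$ violates the upper bound $\nu(B(x,r))\le Cr^t$ at points of $E$. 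The repair (and the paper's choice) is to let $\nu$ be the sum of the inserted measures only; your lower bound at $x\in E$ already uses only the inserted pieces, so nothing is lost. Second, in the upper bound you count the level-$k$ pieces meeting $B(x,r)$ by the $u$-regular packing bound $(r2^k)^u$; then the layer contributions $r^u2^{k(u-t)}$ \emph{increase} geometrically in $k$, so the sum over $2^{-k}\lesssim r$ diverges --- the claim that $u>t$ makes it ``converge at its top term'' has the monotonicity backwards. The correct count exploits the fact that level-$k$ pieces lie within distance comparable to $2^{-k}$ of $E$, so by the $s$-regularity of $E$ their number is at most a constant times $(r2^k)^s$ (this is the paper's estimate $p_k\le 12^sC_Ed^{(l-k)s}$), and the resulting series converges because $t>s$, not because $u>t$.

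Third, your lower bound at $x\in E$ requires a hole of scale comparable to $r$ within distance comparable to $r$ of $x$, and you assert its existence because $X$ is ``connected at all scales''; a $u$-regular metric space need not be anything like that (it can be totally disconnected), so this gives no argument. This is exactly where the dimension gap must be used: the paper finds the hole by a counting argument, comparing the at least $30^{-u}C_X^{-1}d^{-u}$ balls of radius comparable to $d^{k+1}$ needed to cover $B(x_{k,i},d^k)\cap X$ (lower $u$-regularity of $X$) with the at most $4^sC_Ed^{-s}$ of them whose slightly enlarged versions can meet $E$ (both regularity bounds for $E$); for $d$ small this forces some ball of radius $d^{k+1}$, at distance at most $3d^k$ from $x_{k,i}$, whose $6d^{k+1}$-neighbourhood misses $E$. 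With these three repairs --- drop $\mu_E$ from $\nu$, count nearby pieces via the $s$-regularity of $E$ and sum using $t>s$, and prove hole existence by the $s<u$ counting argument --- your outline becomes essentially the paper's proof.
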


\begin{proof}  
We shall only consider the case where $X$ and $E$ are bounded. A slight modification 
of the proof works if $X$ or both $X$ and $E$ are unbounded. 
Recalling the remarks at the beginning of Section 3, we may assume that $d(E)=1$.
Let $0<d<1/30$ be such that $d^{u-s}<4^{-s}30^{-u}C_E^{-1}C_X^{-1}$. 
By Lemma \ref{balls} there are for every $k=1,2\dots,$ disjoint balls 
$B(x_{k,i},6d^k), i=1,\dots,m_k$, such that $x_{k,i}\in E$ and the 
balls $B(x_{k,i},30d^k)$ cover $E$. Further, there are disjoint balls 
$B(x_{k,i},6d^k), i=m_k+1,\dots,n_k$, such that 
$x_{k,i}\in X\setminus\cup_{i=1}^{m_k}B(x_{k,i},30d^k)$ and the 
balls $B(x_{k,i},30d^k), i=1,\dots,n_k$, cover $X$.

Fix $k$ and $i,1\leq i\leq m_k$. Denote
\begin{align*}
&J=\{j\in\{1,\dots,n_k\}: B(x_{k+1,j},d^{k+1})\subset B(x_{k,i},3d^k)\},\\
&J'=\{j\in\{1,\dots,n_k\}: B(x_{k+1,j},30d^{k+1})\cap B(x_{k,i},d^k)\not=\emptyset\},\\
&I=\{j\in J:E\cap B(x_{k+1,j},6d^{k+1})\not=\emptyset\},\\
\end{align*}
and let $n,n'$ and $m$ be the number of indices in $J,J'$ and $I$, respectively. 
Then, as $d<2/31$,  $J'\subset J$ and so $n'\leq n$.
Since $B(x_{k,i},d^k)\subset \cup_{j\in J'}B(x_{k+1,j},30d^{k+1})$, we have, comparing 
measures as in the proof of Lemma \ref{balls}, that $n\geq n'\geq30^{-u}C_X^{-1}d^{-u}$. 
If $j\in I$ there is $z_j\in E\cap B(x_{k+1,j},6d^{k+1})$ and then, as also $j\in J$ and $d<1/7$, 
$$B(z_j,d^{k+1})\subset B(x_{k+1,j},7d^{k+1})\subset B(x_{k,i},4d^k).$$
Then the balls $B(z_j,d^{k+1}), j\in I,$ are disjoint and 
$$md^{(k+1)s}\leq\sum_{j\in I}\mu(B(z_j,d^{k+1}))\leq\mu(B(x_{k,i},4d^k))\leq4^sC_Ed^{ks},$$
whence $m\leq 4^sC_Ed^{-s}$. Combining these inequalities and recalling the choice of 
$d$, we find that
$$m\leq 4^sC_Ed^{-s}<30^{-u}C_X^{-1}d^{-u}\leq n.$$
Thus we can choose some $j\in J\setminus I$. Let $y_{k,i}=x_{k+1,j}$ and $B_{k,i}=B(y_{k,i},d^{k+1})$.
Denote also $2B_{k,i}=B(y_{k,i},2d^{k+1})$. Then for a fixed $k$ the balls $2B_{k,i},i=1,\dots,m_k$, 
are disjoint. If $x\in2B_{k,i}$, then, as $j\not\in I$, $d(x,E)\geq4d^{k+1}$. On the other hand, 
as $j\in J$, $d(x,E)\leq d(x,x_{k,i})\leq d(x,y_{k,i})+d(y_{k,i},x_{k,i})\leq2d^{k+1}+3d^k<d^{k-1}$. 
It follows that the balls 
$2B_{k,i}$ and $2B_{l,j}$ with $|k-l|\geq2$ are always disjoint. Hence any point of $X$ can belong to 
at most two balls $2B_{k,i}, i=1,\dots,m_k, k=1,2,\dots$. 

By Theorem \ref{subset} we can choose for every $k,i,1\leq i\leq m_k$, $t$-regular sets 
$F_{k,i}\subset B_{k,i}$ such that $C_{F_{k,i}}\leq C$ and $d(F_{k,i})\geq cd^k$ with $C$ and $c$ 
depending only on $t,u$ and $C_X$. Let $\nu_{k,i}$ be 
the Borel measure related to $F_{k,i}$ as in Definition \ref{regular}. We define
$$F=E\cup\bigcup_{k=1}^{\infty}\bigcup_{i=1}^{m_k}F_{k,i}$$
and
$$\nu=\sum_{k=1}^{\infty}\sum_{i=1}^{m_k}\nu_{k,i}.$$
Then $F$ is a closed and bounded subset of $X$ containing $E$. 

We check now that $F$ is $t$-regular. Let $x\in F$ and $0<r\leq d(F)$. It is enough to verify the required inequalities for 
$r<d$, so we assume this. Let $l$ be the positive integer for which $d^{l+1}\leq r<d^l$. Denote
$$K=\{(k,i):i=1,\dots,m_k, k<l\ \text{and}\ B_{k,i}\cap B(x,r)\not=\emptyset\}$$
and
$$L=\{(k,i):i=1,\dots,m_k, k\geq l\ \text{and}\ B_{k,i}\cap B(x,r)\not=\emptyset\}.$$
We have 
$$\nu(B(x,r))\leq\sum_{(k,i)\in K}\nu_{k,i}(B_{k,i}\cap B(x,r))+\sum_{(k,i)\in L}\nu_{k,i}(B_{k,i}\cap B(x,r)).$$
If $(k,i)\in K$, then $r<d^{k+1}$ and $B(x,r)\subset2B_{k,i}$. Since this can happen for at most two balls 
$2B_{k,i}$, $K$ can contain at most two 
elements and the first sum above is bounded by $2^{t+1}Cr^t$. To estimate the second sum, let 
$p_k$ be the number of indices in $I_k=\{i:(k,i)\in L\}$. Let $(k,i)\in L$.  
Then $B_{k,i}\cap B(x,r)\not=\emptyset$, and so $d(x_{k,i},x)\leq d(x_{k,i},y_{k,i})+d(y_{k,i},x)
\leq3d^k+2d^{k+1}+r<5d^l$, which gives
$B(x_{k,i},d^k)\subset B(x,6d^l)$. Consequently,
$$p_kd^{ks}\leq\sum_{i\in I_k}\mu(B(x_{k,i},d^k))\leq\mu(B(x,6d^l))\leq C_E12^sd^{ls},$$
and so $p_k\leq12^sC_Ed^{(l-k)s}$.
Hence 
\begin{align*}
&\sum_{(k,i)\in L}\nu_{k,i}(B_{k,i}\cap B(x,r))\leq\sum_{k=l}^{\infty}12^sC_Ed^{(l-k)s}C4^td^{(k+1)t}\leq\\
&12^s4^tC_ECd^{ls}\sum_{k=l}^{\infty}d^{(t-s)k}=12^s4^tC_ECd^{lt}\frac{1}{1-d^{(t-s)}}\leq\\
&12^s4^tC_ECd^{-t}\frac{1}{1-d^{(t-s)}}r^t.
\end{align*}
This proves the upper regularity of $\nu$.

To prove the opposite inequality, suppose first that $x\in E$. Let $k$ be the positive integer 
for which $33d^k\leq r<33d^{k-1}$. 
Then for some $i, 1\leq i\leq m_k, x\in B(x_{k,i},30d^k)$. Since 
$B_{k,i}\subset B(x_{k,i},3d^k)$ we have that $B_{k,i}\subset B(x,33d^k)\subset B(x,r)$. Thus
$$\nu(B(x,r))\geq \nu_{k,i}(B_{k,i})\geq d(F_{k,i})^t\geq c^td^{kt}\geq c^td^t33^{-t}r^t.$$
Suppose finally that $x\in F_{k,i}$ for some $k$ and $i$. If $r\leq9d^k$, then $d(F_{k,i})\geq cd^k\geq(c/9)r$,
whence 
$$\nu(B(x,r))\geq \nu(B(x,(c/9)r)\geq(c/9)^tr^t.$$
If $r>9d^k$, then 
$d(x,x_{k,i})\leq 3d^k<r/3$, 
so $B(x_{k,i},r/2)\subset B(x,r)$. Since $x_{k,i}\in E$, the 
required inequality follows from the case $x\in E$.

\end{proof}

In the next example note that $\lim_{r\to0}\mathcal L^1(F\cap B(x,r))/(2r)=1$ for $\mathcal L^1$ almost all 
$x\in F$ by the Lebesgue density theorem. However, $F$ has no subset $E$ with $\mathcal L^1(E)>0$ for which 
$\mathcal L^1(F\cap B(x,r))/(2r)$ would be bounded below with a positive number uniformly for small $r>0$.

\begin{exmp}\label{example}
There exists a compact set $F\subset\R$ with Lebesgue measure $\mathcal L^1(F)>0$ such that 
it contains no non-empty $s$-regular subset for any $s>0$.
\end{exmp}
\begin{proof} Let $a<b, 0<\lambda<1/2$ and $0<t<1$. We shall construct a family 
$\mathcal I([a,b],\lambda,t)$ 
of closed disjoint subintervals of $[a,b]$. We do this for  $[0,1]$ and then define 
$\mathcal I([a,b],\lambda,t)=\{f(I):I\in\mathcal I([0,1],\lambda,t)\}$ where $f(x)=(b-a)x+a$.

Let
$$I_{1,1}=[(1-\lambda)/2,(1+\lambda)/2].$$
Then $[0,1]\setminus I_{1,1}$ consists of two intervals $J_{1,1}$ and $J_{1,2}$ of length $(1-\lambda)/2$.  
We select closed intervals $I_{2,1}$ and $I_{2,2}$ of length $\lambda(1-\lambda)/2$ in the middle of them 
(that is, the center of $I_{2,i}$ is the center of $J_{1,i})$. 
Continuing this we get intervals $I_{k,i}, i=1,\dots,2^{k-1},$ and $J_{k,i}, i=1,\dots,2^k,$ such that 
$d(I_{k,i})=2^{1-k}\lambda(1-\lambda)^{k-1}$ and $d(J_{k,i})=2^{-k}(1-\lambda)^k$. 
Moreover, each $I_{k,i}$ is the mid-interval 
of some $J_{k-1,j}$ and $J_{k-1,j}\setminus I_{k,i}$ consists of two intervals $J_{k,j_1}$ and $J_{k,j_2}$. Then
\begin{align*}
&\sum_{k=1}^l\sum_{i=1}^{2^{k-1}}d(I_{k,i})=\sum_{k=1}^l\lambda(1-\lambda)^{k-1}\\
&=1-(1-\lambda)^l\to 1\ \text{as}\ l\to\infty.
\end{align*}
We choose $l$ such that
$$\sum_{k=1}^l\sum_{i=1}^{2^{k-1}}d(I_{k,i})>t$$
and denote
$$\mathcal I([0,1],\lambda,t)=\{I_{k,i}:i=1,\dots,2^{k-1}, k=1,\dots,l\}.$$
Then for any compact interval $I\subset\R$,
$$\sum_{J\in\mathcal I(I,\lambda,t)}d(J)>td(I).$$

Let $0<\lambda_k<1/2, 0<t_k<1, k=1,2,\dots$, such that $\lim_{k\to\infty}\lambda_k=0$ and $t=\prod_{k=1}^{\infty}t_k>0$.
Define
$$\mathcal I_1=\mathcal I([0,1],\lambda_1,t_1),$$
and inductively for $m=1,2,\dots,$
$$\mathcal I_{m+1}=\{J:J\in\mathcal I(I,\lambda_{m+1},t_{m+1}), I\in\mathcal I_m\}.$$
The compact set $F$ is now defined as
$$F=\bigcap_{m=1}^{\infty}\bigcup_{I\in \mathcal I_m}I.$$
For every $m=1,2,\dots$ we have
$$\sum_{I\in\mathcal I_m}d(I)>t_1\cdot\dots\cdot t_m>t,$$ 
whence $\mathcal L^1(F)\geq t$.

Suppose that $s>0$ and that $E$ is an $s$-regular subset of $F$. 
Choose $m$ so large that $\lambda_m<C_E^{-s}/4$. Let $x\in E$. Then for every $m=1,2,\dots, x\in I$ 
for some $I\in\mathcal I_m$. Suppose that $I$ would be one of the shortest intervals in the family $\mathcal I_m$. 
Then by our construction there is an 
interval $J$ such that $I$ is in the middle of $J, I\cap E=J\cap E$ and $d(I)=\lambda_md(J)$. As $B(x,d(J)/4)\subset J$ we 
have by the regularity of $E$,
\begin{align*}
&4^{-s}d(J)^s\leq \mu(B(x,d(J)/4))=\\
&\mu(B(x,d(I))\leq C_Ed(I)^s=C_E(\lambda_md(J))^s.
\end{align*}
Thus $\lambda_m\geq C_E^{-1/s}/4$. This contradicts with the choice of $m$. So $E$ contains no points in the 
shortest intervals of $\mathcal I_m$. But then we can repeat the same argument with the second shortest 
intervals of $\mathcal I_m$ concluding that neither can they contain any points of $E$. Continuing this we 
see that $E=\emptyset$.

\end{proof}


\begin{thebibliography}{FSS2}

\bibitem[DS1]{DS}
\textsc{G. David} and \textsc{S. Semmes}: 
\emph{Analysis of and on Uniformly Rectifiable Sets}, Surveys and Monograhs 38, Amer. Math. Soc., 1993.

\bibitem[DS2]{DS2}
\textsc{G. David} and \textsc{S. Semmes}: 
\emph{Fractured Fractals and Broken Dreams}, Oxford University Press, 1997.

\bibitem[FM]{FM}
\textsc{K.J. Falconer} and \textsc{D.T. Marsh}: On the Lipschitz equivalence of Cantor sets, \emph{Mathematika}, 
\textbf{39} (1992), 223-233.

\bibitem[F]{F}
\textsc{H. Federer}: \emph{Geometric Measure Theory}, Springer-Verlag, 1969.

\bibitem[H]{H}
\textsc{J.E. Hutchinson}: Fractals and self similarity, \emph{Indiana Univ. Math. J.}, 
\textbf{30} (1981), 713-747.


\bibitem[JW]{JW} \textsc{A. Jonsson} and \textsc{H. Wallin}: \emph{Function spaces on subsets of $\R^n$}, 
Harwood Academic Publishers, 1984.

\bibitem[M]{M}
\textsc{P. Mattila}: \emph{Geometry of Sets and Measures in Euclidean Spaces}, 
Cambridge University Press, 1995.


\bibitem[RRX]{RRX}
\textsc{H. Rao}, \textsc{H.-J. Ruan} and \textsc{L.-F. Xi}: Lipschitz equivalence and self-similar sets, 
\emph{C. R. Math. Acad. Sci. Paris} \textbf{342, no. 3} (2006), 191-196.


\bibitem[RRY]{RRY}
\textsc{H. Rao}, \textsc{H.-J. Ruan} and \textsc{Y.-M. Yang}: Gap sequence, Lipschitz equivalence and box 
dimension of fractals,
\emph{Nonlinearity} \textbf{21} (2008), 1339-1347.

\end{thebibliography}
\end{document}